\newtheorem{thm}{Theorem}[section]
\newtheorem{prob}{Problem}[section]
\newtheorem{lem}{Lemma}[section]
\newtheorem{cor}{Corollary}[section]
\newtheorem{conj}{Conjecture}[section]
\newtheorem{claim}{Claim}[section]
\newtheorem{case}{Case}
\newtheorem{subcase}{Subcase}[case]
\allowdisplaybreaks\allowdisplaybreaks[4]
\begin{document}
\title{Long cycles and spectral radii in planar graphs\footnote{Supported by the National Natural Science Foundation of China (Nos.\,12271162,\,12326372), and Natural Science Foundation of Shanghai (Nos. 22ZR1416300 and 23JC1401500) and The Program for Professor of Special Appointment (Eastern Scholar) at Shanghai Institutions of Higher Learning (No. TP2022031).}}

\author{ {\bf Ping Xu$^{a,b}$},
{\bf Huiqiu Lin$^{a}$}\thanks{Corresponding author: huiqiulin@126.com(H. Lin)}, {\bf Longfei Fang$^{a}$}
\\
\small $^{a}$ School of Mathematics, East China University of Science and\\
\small Technology, Shanghai 200237, China\\
\small $^{b}$ College of Science, Shihezi University, Shihezi, Xinjiang 832003, China\\}

\date{}
\maketitle {\flushleft\large\bf Abstract.}
There is a rich history of studying the existence of cycles in planar graphs. The famous Tutte theorem on the Hamilton cycle states that every 4-connected planar graph contains a Hamilton cycle. Later on, Thomassen (1983), Thomas and Yu (1994) and Sanders (1996) respectively proved that every 4-connected planar graph contains a cycle of length $n-1, n-2$ and $n-3$. Chen, Fan and Yu (2004) further conjectured that every 4-connected planar graph contains a
cycle of length $\ell$ for $\ell\in\{n,n-1,\ldots,n-25\}$ and they verified that $\ell\in \{n-4, n-5, n-6\}$.
When we remove the ``4-connected" condition, how to guarantee the existence of a long cycle in a planar graph?
A natural question asks by adding a spectral radius condition: What is the smallest constant $C$ such that for sufficiently large $n$, every graph $G$ of order $n$ with spectral radius greater than $C$ contains a long cycle in a planar graph?
In this paper, we give a stronger answer to the above question. Let $G$ be a planar graph with order $n\geq 1.8\times 10^{17}$ and $k\leq \lfloor\log_2(n-3)\rfloor-8$ be a non-negative integer, we show that if $\rho(G)\geq \rho(K_2\vee(P_{n-2k-4}\cup 2P_{k+1}))$ then $G$ contains a cycle of length $\ell$ for every $\ell\in \{n-k, n-k-1, \ldots, 3\}$ unless $G\cong K_2\vee(P_{n-2k-4}\cup 2P_{k+1})$.

\begin{flushleft}
\textbf{Keywords:}
Planar graph; Spectral radius; Long cycles
\end{flushleft}
\textbf{MSC:} 05C50; 05C35; 05C45

\section{Introduction}\label{se-1}


For a family of graphs $\mathcal{H}$, a graph is said to be $\mathcal{H}$-free if it does not contain $H\in\mathcal{H}$ as a subgraph. When $\mathcal{H}=\{H\}$ is a single graph,  we use $H$-free instead of $\mathcal{H}$-free. As usual, we denote by $K_n,C_n$ and $P_n$ the complete graph, the cycle and the path on $n$ vertices, respectively. The maximum number of edges in an $\mathcal{H}$-free graph on $n$ vertices is defined as the $\emph{Tur\'{a}n number}$ of $\mathcal{H}$, denoted by $\mathrm{ex}(n, \mathcal{H})$. Considerable focus has been directed toward the Tur\'{a}n number on cycles. F\"{u}redi and Gunderson \cite{Furedi} determined $\mathrm{ex}(n, C_{2k+1})$ for $n\geq 1$ and $2k+1\geq 5$. Ore \cite{Ore} proved that $\mathrm{ex}(n, C_n)\leq \binom{n-1}{2}+1$. However, the exact value of $\mathrm{ex}(n, C_{2k})$ is still open.

Let $G_1$ and $G_2$ be two graphs. We denote the union of $k$ disjoint copies of a graph $G$ by $k G$. The join of two disjoint graphs $G_1$ and $G_2$, denoted by $G_1 \vee G_2$, is obtained from the disjoint union $G_1\cup G_2$ by joining each vertex of $G_1$ to each vertex of $G_2$. Let $A(G)$ be the adjacency matrix of a connected graph $G$, and $\rho(G)$ be its spectral radius, i.e., the maximum modulus of eigenvalues of $A(G)$. Let $\mathrm{SPEX}(n,\mathcal{H})$ be the set of graphs on $n$ vertices with maximum spectral radius among graphs not containing a subgraph in $\mathcal{H}$. For long cycles, Fiedler and Nikiforov \cite{Fiedler} determined that $\mathrm{SPEX}(n, C_n)=\{K_1\vee(K_{n-2}\cup K_1)\}$. Only recently, Ge and Ning \cite{Ge} proved that $\mathrm{SPEX}(n, C_{n-1})=\{K_1\vee(K_{n-3}\cup K_2)\}$. For more information on relationships between the spectral radius and existence of long cycles in a graph, the readers may refer to \cite{Li-1}.


A graph is called \emph{planar} if it can be drawn in the plane with no pair of edges crossing, and such drawing is called a \emph{plane graph}. Let $\mathrm{spex}_\mathcal{P}(n, \mathcal{H})$ denote the maximum spectral radius of the adjacency matrix of any $\mathcal{H}$-free planar graphs on $n$ vertices, and $\mathrm{SPEX}_\mathcal{P}(n,\mathcal{H})$ denote the set of extremal graphs with respect to $\mathrm{spex}_\mathcal{P}(n, \mathcal{H})$. Let $t\mathcal{C}$ be the family of $t$ vertex-disjoint cycles without length restriction. Tait and Tobin \cite{Tait} proved that $K_2\vee P_{n-2}$ is the spectral extremal graph among all planar graphs with sufficiently large $n$. This implies that the extremal graphs in both $\mathrm{SPEX}_\mathcal{P}(n, tC_{\ell})$ and $\mathrm{SPEX}_\mathcal{P}(n, t\mathcal{C})$ are $K_2\vee P_{n-2}$ for $t\geq 3, \ell\geq 3$. Only very recently, Fang, Lin and Shi \cite{Fang} determined $\mathrm{spex}_\mathcal{P}(n, tC_{\ell})$ and $\mathrm{spex}_\mathcal{P}(n, t\mathcal{C})$ and characterized the unique extremal graph with sufficiently large $n$ for $1\leq t\leq 2$ and $\ell\geq 3$, respectively. Zhai and Liu \cite{Zhai-1} characterized the extremal graphs in $\mathrm{SPEX}_\mathcal{P}(n,\mathcal{H})$ when $\mathcal{H}$ is the family of $k$ edge-disjoint cycles.

Studying the existence of long cycles in planar graphs is an intriguing subject. The pioneering result in this area was established by Whitney \cite{Whitney}, demonstrating that every 4-connected planar triangulation contains a Hamilton cycle. In 1956, Tutte \cite{Tutte} extended this result to all 4-connected planar graphs. Subsequently, Thomassen \cite{Thomassen}, Thomas and Yu \cite{Thomas} and Sanders \cite{Sanders} respectively proved that every 4-connected planar graph contains a cycle of length $n-1, n-2$ and $n-3$. In 1988, Malkevitch \cite{Malkevitch} posed a conjecture concerning cycles of consecutive lengths in 4-connected planar graphs.

\begin{conj}\label{conj-1}\emph{(\cite{Malkevitch})}
Let $G$ be a 4-connected planar graph on $n$ vertices. If $G$ contains a cycle of length 4, then $G$ contains a cycle of length $\ell$ for every $\ell\in \{n, n-1, \ldots, 3\}$.
\end{conj}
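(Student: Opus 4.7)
The plan is to split the cycle-length range $\{3,4,\ldots,n\}$ into three regimes --- long ($\ell$ close to $n$), short ($\ell$ close to $3$), and intermediate --- and attack each with a different tool. For the long cycles I would invoke the Tutte--Thomassen--Thomas--Yu--Sanders chain to handle $\ell\in\{n,n-1,n-2,n-3\}$, and the Chen--Fan--Yu construction for $\ell\in\{n-4,n-5,n-6\}$, then try to iterate the Chen--Fan--Yu local vertex-cutting along a Tutte-type path structure to push the range downward one vertex at a time. The natural hope is that the same "cut a bounded block from a Hamilton cycle while preserving a closed cycle through a Tutte path" argument can be sharpened into an inductive scheme that supplies $\ell = n-k$ for all $k$ up to some $k(n)\to\infty$.

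For the short cycles the hypothesis that $G$ contains a 4-cycle $C=v_1v_2v_3v_4v_1$ is essential. Fixing a plane embedding, at least one of the two faces bounded by $C$ must contain a vertex adjacent to three of the $v_i$; otherwise two of the $v_i$ together with one more vertex would form a 3-cut, contradicting 4-connectivity. This yields a triangle and a 5-cycle simultaneously, and the plan is to recurse on the two smaller regions obtained, at each step peeling off one extra vertex to build cycles of each length $\ell=3,4,\ldots,f(n)$ for some $f(n)\to\infty$. The intermediate regime $f(n)\leq \ell\leq n-k(n)$ is then attacked by Tutte-bridge surgery on the Hamilton cycle $C_0$ provided by Tutte's theorem: construct cycles $C_0,C_1,\ldots$ with $|C_{i+1}|=|C_i|-1$ by locating, at each step, a bridge of $C_i$ whose attachment pattern allows replacing a subpath of length $t$ by an alternative of length $t-1$. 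Planarity forces each bridge to attach to a cyclic interval of $C_i$ (controlling where modifications can occur), and 4-connectivity guarantees that enough rich bridges are available.

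The main obstacle, by a wide margin, is the intermediate regime: ensuring that the local modification decreases the cycle length by \emph{exactly one} at every step, so that no integer length is skipped. Planarity and 4-connectivity do not automatically supply shortcuts of every integer length; I expect one must introduce a numerical potential (for instance, a weighted count of the symmetric difference with $C_0$ together with bridge complexity) that can always be decreased by exactly one under a carefully chosen move, and then prove that such a move is always available. This is presumably the reason the present paper does not attack Conjecture~\ref{conj-1} directly but instead replaces the 4-connectivity hypothesis with a spectral radius bound, converting the combinatorial obstruction into a spectral/eigenvector estimation problem that is still delicate but more tractable.
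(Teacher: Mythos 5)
The statement you are trying to prove is Conjecture~\ref{conj-1}, Malkevitch's conjecture, which the paper merely quotes from the literature: it is an open problem, the paper contains no proof of it, and its actual contribution (Theorem~\ref{thm-1}) is a spectral-radius analogue obtained after dropping 4-connectivity. So there is no proof in the paper to compare yours against, and your proposal, by your own admission, is a programme rather than a proof: the ``intermediate regime'' where you need a modification decreasing the cycle length by exactly one at every step is precisely where the conjecture is open, and hoping for a potential-function argument is not an argument. Even your long-cycle regime overreaches what is known: beyond $\ell\in\{n,\ldots,n-6\}$ only $\ell=n-7$ (Cui) has been settled, the Chen--Fan--Yu conjecture itself stops at $n-25$ and is unproved, and no iteration of their ``local cutting'' technique is known to push the range down by an unbounded amount $k(n)\to\infty$; asserting that such an iteration ``can be sharpened'' is a missing idea, not a step.

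Your short-cycle argument also contains a concrete error. A $4$-cycle $C=v_1v_2v_3v_4v_1$ in a plane embedding need not bound faces at all (it only separates the plane into two regions, each of which may contain many vertices and faces), and the claim that one of these regions must contain a vertex adjacent to three of the $v_i$, ``otherwise two of the $v_i$ together with one more vertex would form a 3-cut,'' is a non sequitur: the absence of such a vertex does not produce any $3$-cut, and indeed nothing in planarity plus 4-connectivity forces a vertex with three neighbours on a prescribed $4$-cycle. Consequently the claimed simultaneous triangle and $5$-cycle, and the recursion ``peeling off one extra vertex'' to reach every length up to $f(n)$, are unsupported. (A triangle does exist in any 4-connected planar graph for edge-count reasons, but that comes from $e(G)\geq 2n>2n-4$, not from your face argument, and it does not give the subsequent lengths.) In short, each of the three regimes in your plan rests on an unproved step, and the middle regime is exactly the known obstruction that has kept Conjecture~\ref{conj-1} (and even its weakened form, Conjecture~\ref{conj-2}) open; this is also why the paper sidesteps it by imposing a spectral condition instead.
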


Later on, Chen, Fan and Yu \cite{Chen} found a counterexample that the line graph of a cyclically 4-edge-connected cubic planar graph with girth at least 5 contains no cycle of length 4. Furthermore, they proposed the following weaker conjecture and demonstrated that every 4-connected planar graph contains a cycle of length $\ell$ for every $\ell\in \{n-4, n-5, n-6\}$.

\begin{conj}\label{conj-2}\emph{(\cite{Chen})}
Let $G$ be a 4-connected planar graph on $n$ vertices. Then $G$ contains a cycle of length $\ell$ for every $\ell\in \{n, n-1, \ldots, n-25\}$ with $\ell\geq 3$.
\end{conj}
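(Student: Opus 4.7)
Since every 4-connected planar graph is Hamiltonian by Tutte's theorem and contains cycles of lengths $n-1$, $n-2$, $n-3$ (Thomassen, Thomas--Yu, Sanders) and $n-4,n-5,n-6$ (Chen--Fan--Yu), the remaining task is to produce cycles of every length $\ell\in\{n-7,\ldots,n-25\}$ with $\ell\ge 3$. The plan is to extend the \emph{Tutte-path} framework that underlies all those earlier results. Recall that a Tutte path in a plane graph $G$ between two vertices $u,v$ is a $u$--$v$ path $P$ such that every component of $G-V(P)$ has at most three neighbors on $P$, all lying on the boundary of a single face containing that component; four-connectivity collapses this ``at most three'' to the statement that every off-path bridge is a single vertex, which is what lets Tutte-path arguments be bootstrapped into Hamilton and near-Hamilton cycles.

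The first step is to prove a \emph{controlled Tutte cycle lemma}: for any 4-connected plane graph $G$, any prescribed face $F$, and any integer $k\le 25$, there is a cycle $C$ bounding a closed disk containing $F$ such that exactly $k$ vertices of $G$ lie outside $C$. The cases $k\in\{0,1,2,3\}$ are the classical theorems and $k\in\{4,5,6\}$ are the Chen--Fan--Yu extension; I would treat the remaining cases by induction on $k$, at each step identifying a \emph{removable ear} --- two consecutive vertices on $C$ whose outside neighborhood admits a length-two bypass through $V(G)\setminus V(C)$ --- and rerouting $C$ through that bypass. Alongside the $2$-ear reduction I would maintain in parallel a $1$-ear and a $3$-ear reduction, so that the shortening amount at each step is selectable from $\{1,2,3\}$.

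The main obstacle is certifying that at least one ear of the desired size exists as long as $k<25$. Chen--Fan--Yu handle $k\le 6$ by exploiting four-connectivity to produce a $2$-ear off the current cycle, and their line-graph counterexample (cubic, cyclically 4-edge-connected, girth $\ge 5$) shows that a girth-type obstruction eventually blocks further reduction. To push the bound from $6$ to $25$ I would combine a discharging argument on the planar dual of $G$ with Sanders' bridge-patching technique: every bounded face of $C$ incident to $\ge 5$ interior vertices must admit a local shortening move, and amortizing these moves over all bounded faces using a carefully chosen initial charge $\mathrm{ch}(f) = \deg(f) - 4$ supplies enough independent reductions to reach $k=25$ before the Chen--Fan--Yu obstruction is activated.

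Finally, I would align the parities: since the available moves shrink $C$ by $1$, $2$, or $3$ vertices, every integer in $\{0,1,\ldots,25\}$ is representable as a short non-negative combination of $1,2,3$, so iteratively selecting the correct move type yields cycles of every intermediate length. The sharpness of the constant $25$ in Chen--Fan--Yu's formulation should then emerge as exactly the threshold at which the dual discharging inequality fails to guarantee a further ear, matching their counterexample up to a constant; if the discharging only closes with slack down to, say, $k\le 22$, one would have to supplement the argument with an ad hoc analysis of near-cubic triangulations for the last few values of $k$, which I anticipate will be the most delicate part of the proof.
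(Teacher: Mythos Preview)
The statement you are trying to prove is Conjecture~\ref{conj-2}, which the paper presents as an \emph{open conjecture} due to Chen, Fan and Yu; the paper does not prove it and offers no argument toward it beyond citing the partial results for $\ell\in\{n,\ldots,n-7\}$. So there is no ``paper's own proof'' to compare against, and any valid proof you produced would be a new theorem, not a reproduction.

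That said, your proposal is not a proof but a programme, and it has genuine gaps at every critical juncture. The ``controlled Tutte cycle lemma'' you state is strictly stronger than what is known: the existing results do not produce a cycle with a \emph{prescribed} number $k$ of vertices outside it, they merely show that some cycle of length $n-k$ exists, and the difference matters because your inductive step needs to start from a cycle with a specific exterior configuration. Your ``removable ear'' mechanism is asserted but not established; in a 4-connected planar graph there is no a~priori reason a given near-Hamilton cycle admits local reroutings that change its length by exactly $1$, $2$, or $3$, and the Chen--Fan--Yu counterexamples you yourself mention show that such local moves can be globally obstructed. The discharging argument is a placeholder: you specify an initial charge but no discharging rules, no target inequality, and no connection between face degrees in the dual and the existence of ears on a particular cycle. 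Finally, you explicitly concede that the method may only reach $k\le 22$ and that the remaining cases would require ``ad hoc analysis,'' which is an acknowledgment that the argument as written does not close. In short, the proposal identifies the right circle of ideas (Tutte paths, bridge structure, 4-connectivity) but does not supply the new ingredient that would be needed to go beyond the known range $\ell\ge n-7$; the conjecture remains open.
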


In 2009, Cui \cite{Cui} proved the Conjecture \ref{conj-2} holds for $\ell=n-7$. Motivated by the study of the existence of cycles in graphs from the perspective of eigenvalues. Naturally, we consider the existence of a long cycle from a spectral perspective in a planar graph and pose the following problem.

\begin{prob}\label{prob-1}
What is the tight spectral radius condition for the existence of a long cycle in a planar graph?
\end{prob}

In this paper, we address Problem \ref{prob-1} by presenting preliminary findings focusing on the spectral radius. In 2008, Nikiforov \cite{Nikiforov} posed the following open problem in spectral graph theory as an analogue to the classical theorems on cycles of consecutive lengths by Bondy and Bollob\'{a}s.

\begin{prob}
What is the maximum $C$ such that for all positive $\varepsilon<C$ and sufficiently large $n$, every graph $G$ of order $n$ with $\rho(G)>\sqrt{\lfloor\frac{n^2}{4}\rfloor}$ contains a cycle of length $\ell$ for every integer $3\leq \ell \leq (C-\varepsilon)n$?
\end{prob}

The first contribution to the above problem is due to Nikiforov \cite{Nikiforov} who showed that $C\geq \frac{1}{320}$, and was improved to $C\geq \frac{1}{160}$ by Peng and Ning \cite{NP}. Only very recently, Zhai and the second author \cite{Zhai} proved that the result holds for $C\geq \frac{1}{7}$ and they further showed that ``sufficiently large $n$" condition can be deleted, Li and Ning \cite{Li}, Zhang \cite{Zhang} respectively improved these results to $C\geq \frac{1}{4}$ and $C\geq \frac{1}{3}$. Motivated by the aforementioned spectral extremal results pertaining to planar graphs, we delve into a spectral extremal problem concerning planar graphs with consecutive cycles, as stated in the following theorem.

\begin{thm}\label{thm-1}
Let $G$ be a planar graph of order $n$ and let $k\leq \lfloor\log_2(n-3)\rfloor-8$ be a non-negative integer, where $n\geq 1.8\times 10^{17}$. If $\rho(G)\geq \rho(K_2\vee(P_{n-2k-4}\cup 2P_{k+1}))$, then $G$ contains a cycle of length $\ell$ for every $\ell\in \{n-k, n-k-1, \ldots, 3\}$ unless $G\cong K_2\vee(P_{n-2k-4}\cup 2P_{k+1})$.
\end{thm}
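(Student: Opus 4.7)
My plan is to argue by contradiction. Suppose $G$ is planar with $\rho(G)\ge\rho(G^*)$ for $G^*:=K_2\vee(P_{n-2k-4}\cup 2P_{k+1})$, that $G\not\cong G^*$, and that $G$ misses some cycle of length $\ell_0\in\{3,\dots,n-k\}$; the goal is a contradiction. An auxiliary observation streamlines the setup: for any linear forest $H=P_{m_1}\cup\cdots\cup P_{m_r}$ with $m_1\ge\cdots\ge m_r\ge 1$ and $r\ge 2$, the cycle spectrum of $K_2\vee H$ equals $\{3,4,\dots,m_1+m_2+2\}$, since any cycle must use both hub vertices $u,v$ (either one disconnects $K_2\vee H$ into a forest) and at most two path components of $H$. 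Applied to $G^*$, this yields cycle spectrum $\{3,\dots,n-k-1\}$, so the only missing length in $\{3,\dots,n-k\}$ is $\ell_0=n-k$, and the statement is equivalent to: every $C_{n-k}$-free planar graph with $\rho\ge\rho(G^*)$ is isomorphic to $G^*$.

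The first main step is a structural reduction showing $G=K_2\vee H$ for some linear forest $H$ on $n-2$ vertices. Let $\mathbf{x}$ be the Perron eigenvector of $G$ normalized with $\max_v x_v=1$, and set $\rho=\rho(G)$. Estimating $\rho(G^*)$ from below via the equitable partition of $G^*$ (the two hubs, endpoints of each path, and interior vertices) gives $\rho\ge\rho(G^*)=\Omega(\sqrt{n})$. Feeding this and the planar edge bound $|E(G)|\le 3n-6$ into the eigenvalue identity $\rho x_v=\sum_{u\sim v}x_u$, a Tait--Tobin-style analysis produces two adjacent vertices $u^*,v^*$ with $x_{u^*},x_{v^*}=1-o(1)$ and degrees $d(u^*),d(v^*)\ge n-2-o(n^{1/2})$, while every other vertex $w$ satisfies $x_w=O(n^{-1/2})$. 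A planar argument on the common neighborhood of $u^*,v^*$ (ruling out $K_{2,3}$-subdivisions rooted there), followed by edge-switching moves that never decrease $\rho$, forces $G=K_2\vee H$ with $H$ outerplanar; a second switching argument, replacing any cycle or degree-$\ge 3$ configuration in $H$ by a path of the same size, then shows $H$ is a linear forest.

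The second main step is a spectral optimization: among all $C_{n-k}$-free linear forests $H$ on $n-2$ vertices, $\rho(K_2\vee H)$ is strictly maximized by $H^*:=P_{n-2k-4}\cup 2P_{k+1}$. By the cycle-spectrum observation the $C_{n-k}$-free condition is exactly $m_1+m_2\le n-k-3$, hence $r\ge 3$ and $m_3\ge k+1$ when $r=3$. Three sublemmas drive the argument: (a) if $r\ge 4$, merging the two smallest components of $H$ into a single path strictly increases $\rho(K_2\vee H)$ via a Kelmans-type eigenvector swap, while preserving $m_1+m_2\le n-k-3$ since the two largest components are untouched, so we may assume $r=3$; (b) writing $(a,b,c)=(m_1,m_2,m_3)$ with $a+b+c=n-2$, $a\ge b\ge c\ge k+1$, the $5$-class equitable partition of $K_2\vee(P_a\cup P_b\cup P_c)$ (hubs, endpoints and interiors of each path) yields a tractable quotient matrix whose Perron root is strictly maximized at $b=c$ for fixed $a$, via a direct convexity argument on the characteristic polynomial; (c) with $b=c$, $\rho(K_2\vee(P_a\cup 2P_b))$ is strictly decreasing in $b\ge k+1$, so the optimum lies at $b=c=k+1$, $a=n-2k-4$, i.e., at $H=H^*$. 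This contradicts $G\not\cong G^*$.

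The hardest part is the quantitative strict inequality in sublemmas (a)--(c). The Perron weight on a path of $K_2\vee H$ decays roughly geometrically from each endpoint, so the spectral gap $\rho(G^*)-\rho(K_2\vee H')$ produced by a single balancing move is of order $2^{-k}n^{-1/2}$; this must dominate the $o(1)$ looseness inherited from the Tait--Tobin analysis of Step 1. Balancing these two requirements is what pins down both the ceiling $k\le\lfloor\log_2(n-3)\rfloor-8$ and the floor $n\ge 1.8\times 10^{17}$. A secondary technical challenge is Step 1's edge-switching, where one must verify that the switching process terminates in a genuine $K_2\vee(\text{linear forest})$ rather than a near-extremal deformation, so that the component-size optimization in Step 2 applies without further error.
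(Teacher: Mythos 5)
The decisive flaw is your sublemma (b): you assert that for fixed $a$, with $b+c$ fixed, $\rho(K_2\vee(P_a\cup P_b\cup P_c))$ is strictly maximized at $b=c$. The truth is the opposite. The paper's Lemma 2.2 (applied with $L=P_a$, $n_1=b$, $n_2=c$) shows that whenever $b\ge c\ge k+2$, replacing $P_b\cup P_c$ by $P_{b+c-(k+1)}\cup P_{k+1}$ \emph{strictly increases} the spectral radius, and the resulting unbalanced forest is still admissible (its two largest components sum to at most $n-k-3$). Hence for fixed $a$ the maximum over feasible $(b,c)$ sits at the most unbalanced split $c=k+1$, and the balanced point is strictly beaten; your two-step scheme (balance $b,c$ for each $a$, then slide along $b=c$) therefore does not locate the maximizer, even though its endpoint happens to coincide with the true optimum $(n-2k-4,k+1,k+1)$. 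A further defect inside (b): the partition of $K_2\vee(P_a\cup P_b\cup P_c)$ into hubs, path endpoints and path interiors is \emph{not} equitable (an interior vertex next to an endpoint and a deep interior vertex have different neighbor counts in the ``endpoint'' class), so the claimed $5$-class quotient matrix does not exist and no convexity argument on its characteristic polynomial can be run; the Perron entries genuinely vary along each path, which is exactly why the paper needs the entry-difference estimates of its Claim 2.1 to prove Lemma 2.2.

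There are two smaller gaps. In sublemma (a), merging the two smallest components need not preserve $m_1+m_2\le n-k-3$: with sizes $(n-2-3k,k,k,k)$ the merged component of size $2k$ becomes the second largest and the two largest then sum to $n-2-k>n-k-3$, so the comparison graph leaves the $C_{n-k}$-free family and no contradiction with extremality follows. The paper sidesteps this by comparing directly with the target $K_2\vee(P_{n-2k-4}\cup 2P_{k+1})$ via its Lemma 2.1 (after joining $K_2$, a linear forest with fewer edges has strictly smaller $\rho$), which requires no feasibility preservation. Finally, your opening reduction (``the statement is equivalent to: every $C_{n-k}$-free planar graph with $\rho\ge\rho(G^*)$ is isomorphic to $G^*$'') is not valid as stated: the hypothetical $G$ is only known to miss \emph{some} length $\ell_0\in\{3,\dots,n-k\}$ and may contain $C_{n-k}$. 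The paper works throughout with the family of planar graphs missing at least one length in $\{3,\dots,n-k\}$, and only after the reduction to $K_2\vee(\text{linear forest})$, whose cycle spectrum is an interval starting at $3$, does ``missing some length'' become equivalent to ``$C_{n-k}$-free''. Your Step 1 is essentially the paper's Lemma 3.1 and is fine in outline, but Step 2 must be rebuilt around the reversed monotonicity (the paper's Lemmas 2.1 and 2.2).
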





The rest of this paper is organized as follows. In Section \ref{se-2}, we introduce some preliminaries that will be employed to prove our main result. In Section \ref{se-3}, we give the proof of Theorem \ref{thm-1}. In Section \ref{se-4}, we conclude some open problems for further study.



\section{Preliminaries}\label{se-2}

For a vertex $v\in V(G)$, the neighborhood of $v$ denoted by $N_{G}(v)=\{u: uv\in E(G)\}$ and the degree of $v$ denoted by $d_{G}(v)=|N_{G}(v)|$.
A \emph{linear forest} is a disjoint union of paths. For two non-negative integers $n$ and $a$ with $n\geq a+3$,
let $\mathcal{L}_{n,a}$ denote the family of linear forests of order $n-2$ and size $n-3-a$. For simplicity, an isolated vertex is referred to as a path of order 1. In order to obtain our main results, we first give the following lemmas.


\begin{lem}\label{lem-1}
Suppose $n$, $a_1$ and $a_2$ are three integers with $n\geq 4$ and $0\leq a_2<a_1 \leq \frac{\sqrt{2n-4}}{4}$. Let $L_i\in \mathcal{L}_{n,a_i}$ for each $i\in \{1,2\}$.
Then $\rho(K_2\vee L_2)>\rho(K_2\vee L_1)$.
\end{lem}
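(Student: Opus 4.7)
The approach is to reduce the spectral-radius comparison to a scalar inequality via a Schur-complement identity. Writing $A(K_2\vee L)$ in block form yields
\[
\det(\lambda I - A(K_2\vee L)) = (\lambda+1)\bigl(\lambda - 1 - 2\phi(L,\lambda)\bigr)\det(\lambda I - A_L),
\]
where $\phi(L,\lambda) := \mathbf{1}^{\top}(\lambda I - A_L)^{-1}\mathbf{1}$. Since $\rho(L)<2<\sqrt{2n-4}\le\rho(K_2\vee L)$, the spectral radius of $K_2\vee L$ is the unique root on $(\rho(L),\infty)$ of $\phi(L,\lambda)=(\lambda-1)/2$. By monotonicity of both sides, proving $\rho(K_2\vee L_2)>\rho(K_2\vee L_1)$ is equivalent to $\phi(L_2,\lambda^\star)>\phi(L_1,\lambda^\star)$ at $\lambda^\star:=\rho(K_2\vee L_1)$. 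Decomposing $L=\bigcup_j P_{m_j}$ and solving the tridiagonal resolvent equation $(\lambda I - A_{P_m})u=\mathbf{1}$, then summing its entries, gives the closed form $(\lambda-2)\phi(P_m,\lambda) = m - 2 u_1(m,\lambda)$, where $u_1(m,\lambda)$ is the (common) endpoint value of $u$, strictly increasing and strictly concave in $m$, converging exponentially to $u_1^{\star}(\lambda):=(1-r_-)/(\lambda-2)$ with $r_\pm=(\lambda\pm\sqrt{\lambda^2-4})/2$. Summing over components,
\[
(\lambda-2)\phi(L,\lambda)=(n-2)-2\,S(L,\lambda),\qquad S(L,\lambda):=\sum_j u_1(m_j,\lambda),
\]
so the target reduces to $S(L_1,\lambda^\star)>S(L_2,\lambda^\star)$ for all $L_i\in\mathcal{L}_{n,a_i}$.

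Next, by Schur-concavity of $S$ (from the concavity of $u_1$ in $m$), among all $L\in\mathcal{L}_{n,a}$ the quantity $S(L,\lambda)$ attains its minimum at the most imbalanced composition $(n-2-a,1,\dots,1)$, i.e.\ $L=P_{n-2-a}\cup aP_1$, giving $\min S(L,\lambda)=u_1(n-2-a,\lambda)+a\,u_1(1,\lambda)$, while the maximum is bounded above by $(a+1)u_1^{\star}(\lambda)$ via $u_1(m,\lambda)\le u_1^{\star}(\lambda)$. Thus it suffices to show
\[
u_1(n-2-a_1,\lambda^\star)+\frac{a_1}{\lambda^\star}>(a_2+1)\,u_1^{\star}(\lambda^\star),
\]
using $u_1(1,\lambda)=1/\lambda$. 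Invoking the expansion $\lambda\,u_1^{\star}(\lambda)=1+(\lambda-2)^{-1}+O(\lambda^{-3})$ and absorbing the exponentially small error $u_1^{\star}-u_1(n-2-a_1,\lambda)$, this reduces to $a_1-a_2>a_2/(\lambda^\star-2)+o(1)$. Since $\lambda^\star\ge\sqrt{2n-4}$ and $a_2<a_1\le\sqrt{2n-4}/4$, the right-hand side is comfortably below $1\le a_1-a_2$ under the hypothesis $n\ge 1.8\times 10^{17}$, completing the argument.

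The main obstacle is the rigorous extremal-structure analysis for $S$: establishing the minimizer $P_{n-2-a}\cup aP_1$ via a clean swap argument on integer compositions, and then carefully controlling the correction terms (the exponentially small tail $u_1^{\star}-u_1(n-2-a_1,\lambda)$ together with the $O(\lambda^{-3})$ term in $\lambda\,u_1^{\star}$) so that they remain strictly subordinate to the integer gap $a_1-a_2\ge 1$. The hypothesis $a_1\le\sqrt{2n-4}/4$ is calibrated precisely so that $a_1/\lambda^\star\le 1/4$, which keeps the correction $a_2/(\lambda^\star-2)$ safely under $1$.
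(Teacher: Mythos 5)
Your strategy is genuinely different from the paper's, and its skeleton is sound: the Schur-complement factorization $\det(\lambda I-A(K_2\vee L))=(\lambda+1)\bigl(\lambda-1-2\phi(L,\lambda)\bigr)\det(\lambda I-A_L)$ is correct, the reduction to comparing $\phi(L_2,\lambda^\star)$ with $\phi(L_1,\lambda^\star)=(\lambda^\star-1)/2$ is legitimate because $\phi(L,\cdot)$ is decreasing on $(\rho(L),\infty)$ and $\rho(L)<2<\lambda^\star$, and the identity $(\lambda-2)\phi(P_m,\lambda)=m-2u_1(m,\lambda)$ checks out. The paper instead runs a half-page Rayleigh-quotient perturbation: it takes the Perron vector of $K_2\vee L_1$, notes that the two dominating vertices have entry $1$ while every vertex of the linear forest has entry in $[2/\rho,2/\rho+8/\rho^2]$, views each $L_i$ as $P_{n-2}$ with $a_i$ edges removed, and bounds the exchanged edge weights by $a_1(2/\rho)^2-a_2(2/\rho+8/\rho^2)^2>0$ using $\rho>\sqrt{2n-4}\geq 4a_2$ --- the same place where the hypothesis $a_1\leq\sqrt{2n-4}/4$ enters your final step. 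The paper's route buys brevity and fully explicit constants valid for every admissible $n$; yours buys an exact scalar equation for $\rho(K_2\vee L)$ that could yield sharper quantitative comparisons, at the cost of resolvent computations and an extremal analysis.

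Two genuine gaps remain in what you wrote. First, the lemma is stated for all $n\geq 4$, but your error control is asymptotic (``$O(\lambda^{-3})$'', ``$o(1)$'') and explicitly invokes $n\geq 1.8\times 10^{17}$, which is \emph{not} a hypothesis of this lemma; as written you only prove a large-$n$ version (enough for the paper's application, but not the stated claim). Second, the extremal-structure step you defer --- strict concavity of $u_1(\cdot,\lambda)$ in $m$ and the swap/majorization argument identifying $P_{n-2-a}\cup aP_1$ as the minimizer of $S$ --- is precisely the unproved core of your argument. Both issues are fixable, and in fact avoidable: by walk-count monotonicity $u_1(m,\lambda)\geq u_1(1,\lambda)=1/\lambda$, so $S(L_1,\lambda^\star)\geq (a_1+1)/\lambda^\star$, while $S(L_2,\lambda^\star)\leq (a_2+1)u_1^{\star}(\lambda^\star)\leq a_1 u_1^{\star}(\lambda^\star)$; since $u_1^{\star}(\lambda)-1/\lambda=(2-\lambda r_-)/(\lambda(\lambda-2))<1/(\lambda(\lambda-2))$ and $a_1\leq\lambda^\star/4$, one gets $a_1u_1^{\star}(\lambda^\star)<a_1/\lambda^\star+1/(4(\lambda^\star-2))<(a_1+1)/\lambda^\star$ whenever $\lambda^\star>8/3$, which holds automatically (the hypotheses are non-vacuous only when $a_1\geq1$, forcing $\sqrt{2n-4}\geq4$). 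This single elementary estimate eliminates the Schur-concavity step, the asymptotic expansions, and the spurious largeness assumption, and would make your proof valid for the lemma exactly as stated.
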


\begin{proof}
For each $i\in \{1,2\}$, let $E_i=E(P_{n-2})\setminus E(L_i)$.
Since $L_i\in \mathcal{L}_{n,a_i}$, we have $|E_i|=a_i$.
By the Perron-Frobenius theorem, there exists a positive eigenvector $\mathbf{x}=(x_1,x_2,\dots,x_n)^{\mathrm{T}}$ corresponding to $\rho:=\rho(K_2\vee L_1)$ with $\max_{u\in V(K_2\vee L_1)}x_u=1$. Clearly, $K_2\vee L_1$ contains exactly two dominating vertices, say $u'$ and $u''$.
Then $x_{u'}=x_{u''}=1$.

Select an arbitrary vertex $u\in V(L_1)$.
Note that $L_1\in  \mathcal{L}_{n,a_1}$. Then $d_{L_1}(u)\leq 2$, and hence

\begin{equation}\label{ineq-2.1}
2=x_{u'}+x_{u''}\leq\rho x_{u}=x_{u'}+x_{u''}+\sum\limits_{v\in N_{L_1}(u)}x_v=2+\sum\limits_{v\in N_{L_1}(u)}x_v\leq 4.
\end{equation}
Therefore, $\frac{2}{\rho}\leq x_{u}\leq \frac{4}{\rho}$, which implies that  $\sum\limits_{v\in N_{L_1}(u)}x_v\leq \frac{8}{\rho}$.
According to \eqref{ineq-2.1}, we obtain
\begin{equation}\label{ineq-2.2}
x_u\in \left[\frac{2}{\rho},\frac{2}{\rho}+\frac{8}{{\rho}^2}\right].
\end{equation}
Since $K_{2,n-2}$ is a proper subgraph of $K_2\vee L_1$, we have
     $$\rho> \rho(K_{2,n-2})=\sqrt{2n-4}\geq\max\{4a_2,2\}.$$
Combining this with \eqref{ineq-2.2}, we obtain
\begin{align*}
\sum\limits_{uv\in E_1}x_{u}x_{v}-\sum\limits_{uv\in E_2}x_ux_v
&\geq a_1\left(\frac{2}{\rho}\right)^2-a_2\left(\frac{2}{\rho}+\frac{8}{{\rho}^2}\right )^2\\
& = \frac{4(a_1-a_2)}{{\rho}^2}-\frac{8a_2}{{\rho}^3}-\frac{16a_2}{{\rho}^4}\\
&\geq \frac{4}{{\rho}^2}-\frac{2\rho}{{\rho}^3}-\frac{4\rho}{{\rho}^4}=\frac{2\rho-4}{{\rho}^3}>0.
\end{align*}
Therefore,
\begin{align*}
\rho(K_2\vee F_2)-\rho(K_2\vee F_1)
&\geq \frac{\mathbf{x}^\mathrm{T} (A(K_2\vee F_2)-A(K_2\vee F_1))\mathbf{x}}{\mathbf{x}^\mathrm{T}\mathbf{x}}\\
&\geq \frac{2}{\mathbf{x}^\mathrm{T}\mathbf{x}}\left(\sum\limits_{uv\in E_1}x_{u}x_{v}-\sum\limits_{uv\in E_2}x_ux_v\right)>0,
\end{align*}
as desired.
\end{proof}

\begin{lem}\label{lem-2}
Let $n,n_1,n_2$ and $k$ be integers with $n_1\geq n_2\geq k+2\geq 2$ and $n\geq 2^{k+8}+3$, and let $L$ be a linear forest with $|V(L)|=n-2-n_1-n_2$.
Then
  $$\rho(K_2\vee(P_{n_1+n_2-(k+1)}\cup P_{k+1}\cup L))> \rho(K_2\vee(P_{n_1}\cup P_{n_2}\cup L)).$$
\end{lem}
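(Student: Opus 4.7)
The plan is to reduce the lemma to a convexity argument via the characteristic equation for $\rho(K_2\vee H)$ when $H$ is a linear forest. Because the two vertices of the $K_2$ are exchangeable, the Perron eigenvector of $G:=K_2\vee H$ takes the same value at both of them; normalizing this common value to $1$, the Perron equations on $V(H)$ become $(\rho I-A(H))\mathbf{y}=2\mathbf{1}$. Since $\rho\ge \sqrt{2n-4}>2>\rho(H)$ the matrix $\rho I-A(H)$ is invertible, and pairing with $\mathbf{1}$ yields the characterization
\[
\frac{\rho-1}{2}=\mathbf{1}^{T}(\rho I-A(H))^{-1}\mathbf{1}=\sum_{C}g_{C}(\rho),
\qquad g_{C}(\rho):=\mathbf{1}^{T}(\rho I-A(C))^{-1}\mathbf{1},
\]
where $C$ ranges over the components of $H$. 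For a path component $P_m$, solving the linear recurrence $\rho y_i-y_{i-1}-y_{i+1}=1$ with virtual boundary $y_0=y_{m+1}=0$ gives the closed form
\[
g_{P_m}(\rho)=\frac{m}{\rho-2}-\frac{2R}{(\rho-2)(1-R)}\cdot\frac{1-R^{m}}{1+R^{m+1}},
\qquad R:=\frac{2}{\rho+\sqrt{\rho^2-4}}\in(0,1).
\]

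The heart of the proof is to show that $m\mapsto g_{P_m}(\rho)$ is \emph{strictly convex} in the integer variable $m\ge 1$ at each fixed $\rho>2$. Writing $g_{P_m}(\rho)=\tfrac{m}{\rho-2}-C\,p(m)$ with $C=\tfrac{2R}{(\rho-2)(1-R)}>0$ and $p(m)=(1-R^m)/(1+R^{m+1})$, this is equivalent to strict concavity of $p$. A direct computation yields
\[
p(m+1)-p(m)=\frac{R^m(1-R^2)}{(1+R^{m+1})(1+R^{m+2})},
\]
and the ratio of consecutive first differences simplifies to $R(1+R^{m+1})/(1+R^{m+3})$; this is strictly less than $1$ on $R\in(0,1)$, for it is equivalent to $R+R^{m+2}(1-R)<1$, whose left side is strictly increasing in $R$ on $(0,1]$ with limiting value $1$. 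Hence the first differences of $p$ are strictly decreasing, so $p$ is strictly concave and $g_{P_\cdot}(\rho)$ is strictly convex.

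With this convexity in hand the lemma is almost immediate. Set $M:=n_1+n_2-(k+1)$, $F_1:=P_{n_1}\cup P_{n_2}\cup L$, and $F_2:=P_M\cup P_{k+1}\cup L$. The multisets $\{n_1,n_2\}$ and $\{M,k+1\}$ share the sum $n_1+n_2$, and the hypothesis $n_2\ge k+2$ gives $\max\{n_1,n_2\}=n_1<M=\max\{M,k+1\}$, so $\{n_1,n_2\}$ is strictly majorized by $\{M,k+1\}$. Strict convexity of $g_{P_\cdot}(\rho)$ therefore produces
\[
g_{P_{n_1}}(\rho)+g_{P_{n_2}}(\rho)<g_{P_M}(\rho)+g_{P_{k+1}}(\rho)\qquad\text{for every }\rho>2.
\]
Evaluating at $\rho_1:=\rho(K_2\vee F_1)$ and using the characteristic equation for $K_2\vee F_1$ yields $\mathbf{1}^{T}(\rho_1 I-A(F_2))^{-1}\mathbf{1}>(\rho_1-1)/2$. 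Since the function $\rho\mapsto \mathbf{1}^{T}(\rho I-A(F_2))^{-1}\mathbf{1}-(\rho-1)/2$ is strictly decreasing on $(\rho(F_2),\infty)$ and vanishes exactly at $\rho(K_2\vee F_2)$, this forces $\rho(K_2\vee F_2)>\rho_1$, which is precisely the lemma. The main obstacle is the second-difference computation that certifies strict concavity of $p$; once that is secured, the majorization inequality together with the monotonicity of the characteristic equation in $\rho$ concludes matters in a line or two.
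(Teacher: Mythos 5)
Your proof is correct, and it takes a genuinely different route from the paper. The paper argues by a local edge rotation: it takes the Perron vector $\mathbf{x}$ of $K_2\vee(P_{n_1}\cup P_{n_2}\cup L)$, proves by induction (its Claim 2.1) that the differences $x_{u_{i+1}}-x_{u_i}$ and $x_{u_i}-x_{w_i}$ lie in shrinking intervals of width $O(2^i/\rho^{i+2})$, and then deletes $u_{t_1}u_{t_1+1},w_{t_2}w_{t_2+1}$ and adds $u_{t_1}w_{t_2},u_{t_1+1}w_{t_2+1}$ with $t_1+t_2=k+1$, using the Rayleigh quotient and a parity case analysis on $k$ to force a strict increase; the hypothesis $n\geq 2^{k+8}+3$ is exactly what makes those eigenvector estimates dominate the error terms. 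You instead exploit the structure of $K_2\vee H$ exactly: equal Perron values on the two dominating vertices give the characteristic equation $\frac{\rho-1}{2}=\sum_C \mathbf{1}^{T}(\rho I-A(C))^{-1}\mathbf{1}$, the resolvent sum $g_{P_m}(\rho)$ has the closed form you state (I verified it, including $m=1$), your difference computation $p(m+1)-p(m)=R^m(1-R^2)/\bigl((1+R^{m+1})(1+R^{m+2})\bigr)$ and the ratio $R(1+R^{m+1})/(1+R^{m+3})<1$ are correct, so $m\mapsto g_{P_m}(\rho)$ is strictly convex, and the strict majorization $\{n_1,n_2\}\prec\{n_1+n_2-(k+1),k+1\}$ (valid since $n_2\geq k+2$ forces $n_1<n_1+n_2-(k+1)$) plus monotonicity of $\rho\mapsto\phi_{F_2}(\rho)-(\rho-1)/2$ finishes the argument. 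Two small remarks: the inequality $R+R^{m+2}(1-R)<1$ follows immediately from $R^{m+2}<1$ and $1-R>0$, so the monotonicity-in-$R$ detour is unnecessary; and it is worth noting explicitly that your argument never uses $n\geq 2^{k+8}+3$ (only $\rho>2$, which is automatic since $K_4\subseteq K_2\vee P_{n_1}$), so you in fact prove a stronger, assumption-free version of the lemma, and indeed the full monotonicity statement that balancing the two path lengths strictly increases the spectral radius. Each approach has its merits: the paper's stays within the rough Rayleigh-quotient toolkit it reuses elsewhere, while yours yields an exact equation, a cleaner proof, and removes the exponential dependence of $n$ on $k$ at the price of solving the boundary-value recurrence for the path resolvent.
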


\begin{proof}

Assume that $P_{n_1}:=u_1u_2\cdots u_{n_1}$ and $P_{n_2}:=w_1w_2\cdots w_{n_2}$.
By the Perron-Frobenius theorem, there exists a positive eigenvector $\mathbf{x}=(x_1,x_2,\dots,x_n)^{\mathrm{T}}$ corresponding to $\rho:=\rho(K_2\vee(P_{n_1}\cup P_{n_2}\cup L))$ with $\max_{u\in V(K_2\vee(P_{n_1}\cup P_{n_2}\cup L))}x_u=1$. Since $K_{2,n-2}$ is a proper subgraph of $K_2\vee(P_{n_1}\cup P_{n_2}\cup L)$, we get $\rho> \rho(K_{2,n-2})=\sqrt{2n-4}$.
Furthermore, we have the following claim.

\begin{claim}\label{claim-3.3}
Let $i$ be a positive integer.
Set $A_i=[\frac{2}{\rho}-\frac{8\times 2^i}{\rho^2},\frac{2}{\rho}+\frac{8\times 2^i}{\rho^2}]$ and $B_i=[-\frac{8\times 2^i}{\rho^2}, \frac{8\times 2^i}{\rho^2}]$. Then\\
(i) for any $i\in\{1,\ldots,\lfloor\frac{k+2}{2}\rfloor\}$,
$\rho^i(x_{u_{i+1}}-x_{u_i})\in A_i$ and $\rho^i(x_{w_{i+1}}-x_{w_i})\in A_i$;\\
(ii) for any $i\in\{1,\ldots,\lfloor\frac{k+3}{2}\rfloor\}$, $\rho^i(x_{u_i}-x_{w_i})\in B_i$.
\end{claim}

\begin{proof}
($i$) We will proceed with the proof by using induction on $i$. Clearly,
\begin{equation}\label{ineq-15}
\rho x_{u_j}=\sum\limits_{\substack{u\sim u_j\\u\in V(G)}}x_{u}=\begin{cases}
2+x_{u_2}, &\mbox{if}~~j=1,\\
2+x_{u_{j-1}}+x_{u_{j+1}}, &\mbox{if}~~2\leq j\leq n_1-1.
\end{cases}
\end{equation}
By using a similar analysis as (\ref{ineq-2.2}), we have
\begin{equation}\label{ineq-16}
\rho(x_{u_{j+1}}-x_{u_j})=\begin{cases}
x_{u_1}+x_{u_3}-x_{u_2}\in A_1, &\mbox{if}~~j=1,\\
(x_{u_j}-x_{u_{j-1}})+(x_{u_{j+2}}-x_{u_{j-1}})\in B_1, &\mbox{if}~~2\leq j\leq n_1-2.
\end{cases}
\end{equation}
So the result is true when $i=1$. Next, assume that $2\leq i\leq \lfloor\frac{k+2}{2}\rfloor$, which implies that $k\geq 2i-2$. For $i\leq j\leq n_1-i-1$, we get $\rho(x_{u_{j+1}}-x_{u_j})=(x_{u_j}-x_{u_{j-1}})+(x_{u_{j+2}}-x_{u_{j-1}})$, and hence
\begin{equation}\label{ineq-17}
\rho^i(x_{u_{j+1}}-x_{u_j})=\rho^{i-1}(x_{u_j}-x_{u_{j-1}})+\rho^{i-1}(x_{u_{j+2}}-x_{u_{j-1}}).
\end{equation}
By the induction hypothesis, it follows that
\begin{center}
$\rho^{i-1}(x_{u_i}-x_{u_{i-1}})\in A_{i-1} \quad \text{and} \quad \rho^{i-1}(x_{u_{i+2}}-x_{u_{i-1}})\in B_{i-1}.$
\end{center}
According to (\ref{ineq-17}) and setting $j=i$, we have $\rho^{i}(x_{u_i}-x_{u_{i-1}})\in A_{i}$, as desired.
If $i+1\leq j\leq n_1-i-1$, then by the induction hypothesis,
\begin{center}
$\rho^{i-1}(x_{u_j}-x_{u_{j-1}})\in B_{i-1} \quad \text{and} \quad \rho^{i-1}(x_{u_{j+2}}-x_{u_{j-1}})\in B_{i-1}.$
\end{center}
Again by (\ref{ineq-17}), we can deduce that $\rho^{i}(x_{u_{j+1}}-x_{u_j})\in B_{i-1}$, as desired. Thus, for any $i\in\{1,\ldots,\lfloor\frac{k+2}{2}\rfloor\}$, we have
$$\rho^i(x_{u_{j+1}}-x_{u_j})\in\begin{cases}
A_i, &\mbox{if}~~j=i,\\
B_i, &\mbox{if}~~i+1\leq j\leq n_1-i-1.
\end{cases}$$
This completes the proof of $\rho^i(x_{u_{i+1}}-x_{u_i})\in A_i$.

The proof of $\rho^i(x_{w_{i+1}}-x_{w_i})\in A_i$ is similar to that of $\rho^i(x_{u_{i+1}}-x_{u_i})\in A_i$ and thus omitted here.

$(ii)$ For any $i\in\{1,\ldots,\lfloor\frac{k+3}{2}\rfloor\}$ and $j\in \{i,\ldots, n_2-i\}$, we only need to show that $\rho^i(x_{u_j}-x_{w_j})\in B_i$. Obviously
\begin{equation*}
\rho x_{w_j}=\sum\limits_{\substack{w\sim w_j\\w\in V(G)}}x_{w}=\begin{cases}
2+x_{w_2}, &\mbox{if}~~j=1,\\
2+x_{w_{j-1}}+x_{w_{j+1}}, &\mbox{if}~~2\leq j\leq n_2-1.
\end{cases}
\end{equation*}
Combining this with (\ref{ineq-2.2}) and (\ref{ineq-15}), we obtain
\begin{equation*}
\rho(x_{u_{j}}-x_{w_j})=\begin{cases}
x_{u_2}-x_{w_2}\in B_1, &\mbox{if}~~j=1,\\
(x_{u_{j-1}}-x_{w_{j-1}})+(x_{u_{j+1}}-x_{w_{j+1}})\in B_1, &\mbox{if}~~2\leq j\leq n_2-1.
\end{cases}
\end{equation*}
By induction on $i$. We have already observed that the assertion holds for $i=1$, so assume that $i\geq 2$. If $i\leq j\leq n_2-i$, then $\rho(x_{u_{j}}-x_{w_j})=(x_{u_{j-1}}-x_{w_{j-1}})+(x_{u_{j+1}}-x_{w_{j+1}})$, and hence
\begin{equation}\label{ineq-18}
\rho^i(x_{u_{j}}-x_{w_j})=\rho^{i-1}(x_{u_{j-1}}-x_{w_{j-1}})+\rho^{i-1}(x_{u_{j+1}}-x_{w_{j+1}}).
\end{equation}
By the induction hypothesis, we have
\begin{center}
$\rho^{i-1}(x_{u_{j-1}}-x_{w_{j-1}})\in B_{i-1}\quad \text{and} \quad\rho^{i-1}(x_{u_{j+1}}-x_{w_{j+1}})\in B_{i-1}.$
\end{center}
Combining this with (\ref{ineq-18}), we have $\rho^i(x_{u_{j}}-x_{w_j})\in B_i$.
\end{proof}

Since $n\geq 2^{k+8}+3$, we have $\rho\geq \sqrt{2n-4}>8\times 2^{\frac{k+3}{2}}$. For any $i\leq \frac{k+3}{2}$, we get
$$\frac{2}{\rho^{i+1}}-\frac{8\times2^i}{\rho^{i+2}}>\left(\frac{2}{\rho^{i+1}}-\frac{8\times 2^i}{\rho^{i+2}}\right)-\frac{8\times 2^i}{\rho^{i+2}}>0.$$
Combining this with Claim \ref{claim-3.3}, we obtain
\begin{align}\label{ineq-20}
x_{u_{i+1}}-x_{u_i}\geq \frac{2}{\rho^{i+1}}-\frac{8\times2^i}{\rho^{i+2}}>0
\end{align}
and
\begin{align}\label{ineq-21}
x_{u_{i+1}}-x_{w_i}=(x_{u_{i+1}}-x_{u_i})+(x_{u_i}-x_{w_i})\geq \left(\frac{2}{\rho^{i+1}}-\frac{8\times 2^i}{\rho^{i+2}}\right)-\frac{8\times 2^i}{\rho^{i+2}}>0
\end{align}
for any $i\leq \lfloor\frac{k+2}{2}\rfloor$. Similarly,
\begin{align}\label{ineq-22}
x_{w_{i+1}}>x_{w_i}~~\text{and}~~x_{w_{i+1}}>x_{u_i}~~\text{for any}~~i\leq\left\lfloor\frac{k+2}{2}\right\rfloor.
\end{align}

Denote by $H_1=P_{n_1}\cup P_{n_2}\cup L$ and $H_2=P_{n_1+n_2-(k+1)}\cup P_{k+1}\cup L$. Let $t_1$ and $t_2$ be two non-negative integers with $t_1+t_2=k+1$. Let $H^*$ be the graph obtained from $H_1$ by deleting edges $u_{t_1}u_{t_1+1}$ and $w_{t_2}w_{t_2+1}$, and adding edges $u_{t_1}w_{t_2}$ and $u_{t_1+1}w_{t_2+1}$. Note that $H^*\cong H_2$ as $t_1+t_2=k+1$. Then
\begin{align}\label{ineq-19}
\rho(K_2\vee H_2)-\rho(K_2\vee H_1)&\geq\frac{\mathbf{x}^\mathrm{T}(A(K_2\vee H_2)-A(K_2\vee H_1))\mathbf{x}}{\mathbf{x}^\mathrm{T}\mathbf{x}}\notag\\
&\geq\frac{2}{\mathbf{x}^\mathrm{T}\mathbf{x}}(x_{u_{t_1+1}}-x_{w_{t_2}})(x_{w_{t_2+1}}-x_{u_{t_1}}).
\end{align}

Next, we will divide the proof into the following two cases basing on the parity of $k$.
\begin{case}\label{case-3}
$k$ is odd.
\end{case}

Set $t_1=\frac{k+1}{2}$. Since $t_1+t_2=k+1$, it follows that $t_2=\frac{k+1}{2}$. By (\ref{ineq-21}) and (\ref{ineq-22}), we get $x_{u_{t_1+1}}> x_{w_{t_2}}$ and $x_{w_{t_2+1}}> x_{u_{t_1}}$. Combining this with (\ref{ineq-19}), we can deduce that $\rho(K_2\vee H_2)>\rho(K_2\vee H_1)$.

\begin{case}\label{case-4}
$k$ is even.
\end{case}

We first consider $x_{u_{\frac{k+2}{2}}}\geq x_{w_{\frac{k+2}{2}}}$. Let $t_1=\frac{k}{2}$. Then $t_2=\frac{k+2}{2}$ due to $t_1+t_2=k+1$.
Since $x_{u_{\frac{k+2}{2}}}\geq x_{w_{\frac{k+2}{2}}}$, it follows that $x_{u_{t_1+1}}\geq x_{w_{t_2}}$. From (\ref{ineq-22}), we can get $x_{w_{t_2+1}}> x_{w_{t_2}}$ and $x_{w_{t_2}}> x_{u_{t_1}}$. This implies that $x_{w_{t_2+1}}> x_{u_{t_1}}$. By (\ref{ineq-19}), we have $\rho(K_2\vee H_2)\geq \rho(K_2\vee H_1)$. If $\rho(K_2\vee H_2)=\rho(K_2\vee H_1)$, then $\mathbf{x}$ also is a positive eigenvector of $\rho(K_2\vee H_2)$, and hence $\rho(K_2\vee H_2)x_{w_{t_2}}=2+x_{w_{t_2-1}}+x_{u_{t_1}}$. On the other hand, $\rho(K_2\vee H_1)x_{w_{t_2}}=2+x_{w_{t_2-1}}+x_{w_{t_2+1}}$. This implies that $x_{w_{t_2+1}}=x_{u_{t_1}}$, a contradiction. Therefore, $\rho(K_2\vee H_2)> \rho(K_2\vee H_1)$.

Next, we consider $x_{u_{\frac{k+2}{2}}}< x_{w_{\frac{k+2}{2}}}$.
Let $t_1=\frac{k+2}{2}$. Then by $t_1+t_2=k+1$, we have $t_2=\frac{k}{2}$. Since $x_{u_{\frac{k+2}{2}}}< x_{w_{\frac{k+2}{2}}}$, it follows that $x_{w_{t_2+1}}> x_{u_{t_1}}$. By (\ref{ineq-20}) and (\ref{ineq-21}), we can get $x_{u_{t_1+1}}> x_{u_{t_1}}$ and $x_{u_{t_1}}> x_{w_{t_2}}$. Thus, $x_{u_{t_1+1}}> x_{w_{t_2}}$. Combining this with (\ref{ineq-19}), we have $\rho(K_2\vee H_2)> \rho(K_2\vee H_1)$.

This completes the proof.
\end{proof}

\section{Proof of Theorem \ref{thm-1}}\label{se-3}
Before proceeding, we describe some notation and terminology necessary for stating and proving results.
Let $G$ be a planar graph with vertex set $V(G)$ and edge set $E(G)$. The order and size of $G$ are denoted by $|V(G)|$ and $|E(G)|=e(G)$, respectively. For two disjoint subset $X,Y\subset V(G)$, we denote by $G[X,Y]$ the bipartite subgraph of $G$ with vertex set $X\cup Y$ and edges having one endpoint in $X$ and the other endpoint in $Y$. The subgraph of $G$ induced by $X$, denoted by $G[X]$, is the graph with vertex set $X$ and an edge set consisting of all edges of $G$ that have both ends in $X$.
Let $N_X(v):=N_G(v)\cap X$ and $d_X(v):=|N_X(v)|$.
Define $e(X,Y)$ as the number of edges in the bipartite subgraph $G[X,Y]$, and $e(X)$ as the number of edges in the subgraph $G[X]$.
Moreover,
\begin{align}\label{ineq-3.0}
e(X)\leq 3|X|-6~~\text{and}~~e(X,Y)\leq 2(|X|+|Y|)-4.
\end{align}


Denote by $\mathcal{G}_{n,k}=\underset{3\leq\ell\leq n-k}{\cup}\{~G~|~G~\text{is a}~  C_{\ell}\text{-free planar graph of order}~n\}$. Let $\mathcal{C}_{n,k}$ be the set of graphs attaining the maximum spectral radii over all graphs in $\mathcal{G}_{n,k}$. 
We first give a lemma which plays a key role in the proof of Theorem \ref{thm-1}.

\begin{lem}\label{lem-3.1}
Let $k$ be a non-negative integer and $n\geq \mathrm{max}\{1.8\times 10^{17}, 2^{k+8}+3 \}$. Then every graph in
$\mathcal{C}_{n,k}$ contains a spanning subgraph $K_2\vee (n-2)K_1$.
\end{lem}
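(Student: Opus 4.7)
The plan is to prove $G\in\mathcal{C}_{n,k}$ has two dominating vertices by combining a spectral lower bound on $\rho(G)$ with the structural rigidity of planar graphs. The lower bound comes from the fact that the candidate graph $K_2\vee(P_{n-2k-4}\cup 2P_{k+1})$ itself lies in $\mathcal{G}_{n,k}$; the rigidity comes from $e(G)\le 3n-6$ and the non-planarity of $K_{3,3}$; a final edge-swap argument then upgrades ``two high-weight vertices'' to ``two vertices adjacent to everything''.

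I would first certify that $K_2\vee(P_{n-2k-4}\cup 2P_{k+1})\in\mathcal{G}_{n,k}$: any cycle in it uses at most two of its three path components together with the two apex vertices, so the longest cycle has length $(n-2k-4)+(k+1)+2=n-k-1$, making the graph $C_{n-k}$-free. By extremality of $G$ and the inclusion $K_{2,n-2}\subseteq K_2\vee(P_{n-2k-4}\cup 2P_{k+1})$, this provides the lower bound $\rho(G)\ge\rho(K_{2,n-2})=\sqrt{2n-4}$, which will be the main numerical input, with a small additive improvement available from a trial Perron vector if needed downstream.

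Next, I would fix a Perron eigenvector $\mathbf{x}$ of $G$ normalized so that $\max_v x_v=1$ is achieved at some $u_1$. Using
\[
\rho x_v=\sum_{w\sim v}x_w\le d(v), \qquad \rho^2 x_v\le \sum_{w\sim v}d(w)\le 2e(G)\le 6n-12,
\]
one gets quantitative control on the Perron weights. I would then argue that the weights $x_v$ for $v$ outside a small set $\{u_1,u_2\}$ are $O(1/\sqrt{n})$, as follows: vertices of weight close to $1$ must have nearly linear degree (otherwise the eigenvalue equation at such a vertex cannot reach $\rho\gtrsim\sqrt{2n}$ from only a sublinear number of neighbors carrying $o(1)$ weight), and three vertices of nearly linear degree in a planar graph have, by inclusion--exclusion on $|N_1\cup N_2\cup N_3|\le n$, a triple common neighborhood of size $\ge 3$, yielding a $K_{3,3}$ subgraph and contradicting planarity. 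Thus at most two vertices can have weight bounded below by a constant, and a separate Rayleigh argument shows a second such vertex $u_2$ must indeed exist with $x_{u_2}$ close to $x_{u_1}=1$.

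Finally, to upgrade $u_1$ and $u_2$ from ``high weight'' to ``dominating, and mutually adjacent'', I would run an edge-swap argument. If $u_1\not\sim v$ for some $v\notin\{u_1,u_2\}$, choose an edge $vw$ with $x_w$ far smaller than $x_{u_1}=1$, delete $vw$, and add $u_1v$; the Rayleigh quotient then rises by at least $2x_v(x_{u_1}-x_w)/\|\mathbf{x}\|^2>0$, forcing $\rho(G')>\rho(G)$. The same works for $u_2$ and for the edge $u_1u_2$. The hardest part will be ensuring that the modified graph still lies in $\mathcal{G}_{n,k}$, since adding $u_1v$ can create cycles of many new lengths; I expect the remedy is to track the particular $\ell\in\{3,\ldots,n-k\}$ for which $G$ is $C_\ell$-free and to choose the removed edge $vw$ so that every newly created cycle through $u_1v$ avoids that specific length. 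The generous hypotheses $n\ge 1.8\times 10^{17}$ and $k\le\lfloor\log_2(n-3)\rfloor-8$ should provide more than enough slack for this bookkeeping to succeed.
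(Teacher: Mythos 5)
Your opening steps match the paper's: the lower bound $\rho(G)\ge\rho(K_{2,n-2})=\sqrt{2n-4}$, the observation that a planar graph cannot contain three vertices of nearly linear degree (else $K_{3,3}$), and the conclusion that there are exactly two vertices $u',u''$ of Perron weight close to $1$ and nearly full degree while all other weights are small -- this is essentially the paper's Claims on $M$, on degrees, on $x_{u''}\ge\frac{997}{1000}$, and on $x_u\le\frac{3}{100}$. The genuine gap is in your final ``edge-swap'' step, in two respects. First, planarity is never addressed there: the comparison class $\mathcal{G}_{n,k}$ consists of \emph{planar} graphs, and deleting an edge $vw$ and adding $u_1v$ need not leave a planar graph. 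The paper avoids this by fixing a planar embedding $\tilde{G}$ of $G[\{u',u''\}\cup R]$ and performing every modification inside a single face (a quadrilateral $u'u_{i_0}u''u_{i_0+1}u'$ or a face containing $u'u''$), rewiring all of $S=V(G)\setminus(\{u',u''\}\cup R)$ at once rather than one vertex at a time.

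Second, your proposed remedy for keeping the modified graph in $\mathcal{G}_{n,k}$ -- ``track the particular $\ell$ for which $G$ is $C_\ell$-free and choose the removed edge $vw$ so that every newly created cycle through $u_1v$ avoids that length'' -- is not a workable plan and is exactly where the real difficulty lies. Once $u_1$ is adjacent to almost everything and the rest of the graph is close to $K_2\vee(\text{linear forest})$, adding $u_1v$ creates cycles of essentially all lengths through $v$, and no choice of a single deleted edge controls one prescribed missing length. The paper instead never tracks the missing length of $G$: it first pins down the structure of $G[R]$ (a linear forest or a cycle, since $G[R]$ has maximum degree at most $2$ by $K_{3,3}$-minor-freeness), then verifies membership in $\mathcal{G}_{n,k}$ by bounding the \emph{circumference} of the surgered graph by $n-k-1$, so it is $C_{n-k}$-free outright. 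This forces a delicate extremal case, $|R|=n-k-3$ with $G[R]\cong P_{n-k-3}$, where simply attaching $S$ to $\{u',u''\}$ would create a cycle of length $n-k$; there the paper additionally deletes two path edges $u_{i_0-1}u_{i_0},u_{i_0}u_{i_0+1}$ chosen away from the attachment points of $S$, and needs the lower bound $x_{v}\ge\frac{1}{\rho}$ for $v\in S$ (proved by another exchange argument) together with $x_{u_{i}}\le\frac{4}{\rho}$ to show the Rayleigh quotient still strictly increases. Your sketch contains no counterpart to this case analysis, nor to the separate argument that adding $u'u''$ itself creates no new cycle length, so as written the proof does not go through.
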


\begin{proof}
Choose an arbitrary graph $G\in \mathcal{C}_{n,k}$ and let $\rho=\rho(G)$.
By the Perron-Frobenius theorem,
there exists a positive eigenvector $\mathbf{x}=(x_1,x_2,\dots,x_n)^{\mathrm{T}}$ corresponding to $\rho$ with $\max_{u\in V(G)}x_u=1$. Let $u'\in V(G)$ with $x_{u'}=1$.
Clearly, $K_{2,n-2}$ is planar and $C_{n-k}$-free, which implies $K_{2,n-2}\in \mathcal{G}_{n,k}$. Then
\begin{align}\label{ineq-3.1}
\rho\geq\rho(K_{2,n-2})=\sqrt{2n-4}.
\end{align}

 We proceed with a sequence of claims.

\begin{claim}\label{claim-1}
Let $M=\{u\in V(G)~|~x_{u}\geq \frac{1}{10^4}\}$. Then $|M|\leq\frac{n}{10^4}$.
\end{claim}

\begin{proof}
For each vertex $u\in V(G)$, by (\ref{ineq-3.1}) and the definition of $M$, we get
\begin{align}\label{ineq-3.2}
\frac{\sqrt{2n-4}}{10^4}\leq\rho x_u=\sum\limits_{v\in N_G(u)}x_v\leq d_{G}(u).
\end{align}
Therefore, $$|M|\frac{\sqrt{2n-4}}{10^4}\leq\sum\limits_{u\in M}d_{G}(u)\leq\sum\limits_{u\in V(G)}d_{G}(u)\leq 2(3n-6).$$
Since $n\geq 1.8\times 10^{17}$, we have $|M|\leq 3\times 10^4\sqrt{2n-4}\leq\frac{n}{10^4}$.
\end{proof}

\begin{claim}\label{claim-2}
For any $u\in M$, we have  $d_G(u)\geq (x_u-\frac{8}{10^4})n$.
\end{claim}

\begin{proof}
Since $G$ is planar, by Claim \ref{claim-1} and (\ref{ineq-3.0}), we have $e(M)\leq 3|M|\leq \frac{3n}{10^4}$, and hence
$$e(N_G(u)\setminus M,M)\leq 2(|N_G(u)\setminus M|+|M|)-4\leq 2d_G(u)+\frac{2n}{10^4}.$$
Combining the above two inequalities gives
\begin{align}\label{ineq-3.3}
\sum\limits_{v\in M}d_{N_G(u)}(v)
&=\sum\limits_{v\in M}d_{N_G(u)\cap M}(v)+\sum\limits_{v\in M}d_{N_G(u)\setminus M}(v)\nonumber\\
&\leq2e(M)+e(N_G(u)\setminus M,M)\nonumber\\
&\leq 2d_G(u)+\frac{8n}{10^4}.
\end{align}
On the other hand,
\begin{align}\label{ineq-3.4}
\sum\limits_{v\in V(G)\setminus M}d_{N_G(u)}(v)x_v\leq \sum\limits_{v\in V(G)}\frac{d_{G}(v)}{10^4}
\leq \frac{2e(G)}{10^4}\leq \frac{6n}{10^4}.
\end{align}
Combining this with (\ref{ineq-3.3}), we obtain that
\begin{align}\label{ineq-3.5}
(2n-4)x_u\leq \rho^2x_u=\sum\limits_{v\in V(G)}d_{N_G(u)}(v)x_v\leq 2d_G(u)+\frac{14n}{10^4},
\end{align}
which yields that $d_G(u)\geq (x_u-\frac{8}{10^4})n$ as $n\geq 1.8\times 10^{17}$, as desired.
\end{proof}

\begin{claim}\label{claim-3}
Assume that $u''=\max_{u\in V(G)\setminus \{u'\}}x_u$.
Then $x_{u''}\geq \frac{997}{1000}$.
\end{claim}


\begin{proof}
By Claim \ref{claim-2}, we have $d_{V(G)\setminus M}(u')\geq d_G(u')-|M|\geq (1-\frac{9}{10^4})n$.
It follows that
\begin{align}\label{ineq-3.6}
e(N_G(u')\setminus M,M\setminus\{u'\})
&= e(N_G(u')\setminus M,M)-d_{V(G)\setminus M}(u')\nonumber \\
&\leq (2n-4)-(1-\frac{9}{10^4})n\leq (1+\frac{9}{10^4})n.
\end{align}
Recall that $e(M)\leq 3|M|\leq \frac{3n}{10^4}$. Thus,
$$\sum\limits_{v\in M\setminus \{u'\}}d_{N_G(u)\cap M}(v)x_v
\leq \sum\limits_{v\in M}d_{M}(v)=2e(M)\leq \frac{6n}{10^4}. $$
Assume that $u''=\max_{u\in V(G)\setminus \{u'\}}x_u$.
Consequently,
\begin{align*}
\sum\limits_{v\in M}d_{N_G(u)}(v)x_v
&=\sum\limits_{v\in M\setminus \{u'\}}d_{N_G(u)\cap (M)}(v)x_v+\sum\limits_{v\in M\setminus \{u'\}}d_{N_G(u)\setminus M}(v)x_v+d_G(u')x_{u'}\nonumber\\
&\leq\frac{6n}{10^4}+e(N_G(u)\setminus M,M\setminus\{u'\})x_{u''}+n.
\end{align*}
Setting $u=u'$ and combining this with (\ref{ineq-3.4}), we obtain
$$2n-4\leq \rho^2x_{u'}\leq \frac{12n}{10^4}+e(N_G(u')\setminus M,M\setminus\{u'\})x_{u''}+n,$$
which leads to that $e(N_G(u')\setminus M,M\setminus\{u'\})x_{u''}\geq (1-\frac{14}{10^4})n$.
This, together with \eqref{ineq-3.6}, gives that
$x_{u''}\geq \frac{(1-\frac{14}{10^4})n}{(1+\frac{9}{10^4})n}\geq\frac{997}{10^3}$, as desired.
\end{proof}

Note that $x_{u'}=1$ and $x_{u''}\geq \frac{997}{1000}$.
By Claim \ref{claim-2}, we have
\begin{align}\label{ineq-13}
d_{G}(u')\geq \frac{999n}{1000} ~~and~~ d_{G}(u'')\geq \frac{996n}{1000}.
\end{align}
Now, let $R=N_{G}(u')\cap N_{G}(u'')$ and $S=V(G)\setminus(\{u', u''\}\cup R)$. So $|S|\leq (n-d_{G}(u'))+(n-d_{G}(u''))\leq \frac{5n}{1000}$. Next, we show the eigenvector entries of vertices in $V(G)\setminus\{u',u''\}$ are small.
\begin{claim}\label{claim-4}
Let $u\in V(G)\setminus\{u',u''\}$. Then $x_u\leq\frac{3}{100}$.
\end{claim}

\begin{proof}
We assert that for each $u\in S$, $u$ is adjacent to at most one of $u'$ and $u''$, and is adjacent to at most 2 vertices in $R$. Otherwise, $G$ would contains a copy of $K_{3,3}$, contradicting that $G$ is planar. Thus,
$$\rho\sum\limits_{u\in S}x_u\leq \sum\limits_{u\in S}d_{G}(u)\leq \sum\limits_{u\in S}(3+d_{S}(u))\leq 3|S|+2e(S)<9|S|\leq \frac{45n}{1000},$$
where the second-to-last inequality holds by $e(S)<3|S|$.
Dividing both sides by $\rho$, we get $\sum\limits_{u\in S}x_u\leq \frac{45n}{1000\rho}$.
Since $G$ is $K_{3,3}$-free, we get $u$ is adjacent to at most 4 vertices in $R\cup \{u',u''\}$ for any $u\in V(G)\setminus\{u',u''\}$.
It follows that
$$\rho x_u=\sum\limits_{w\sim u}x_w\leq 4+\sum\limits_{\substack{w\sim u\\ w\in S}}x_w\leq 4+\sum\limits_{w\in S}x_w\leq 4+\frac{45n}{1000\rho},$$
and hence $x_u\leq \frac{4}{\rho}+\frac{45n}{1000\rho^2}$. Combining this with (\ref{ineq-3.1}), we get $x_u\leq\frac{3}{100}$.
\end{proof}

\begin{claim}\label{claim-2.1}
If $G[R]\cong \cup_{i=1}^t P_{n_i}$, where $t\geq 2$ and $n_1\geq n_2\geq \cdots \geq n_t$, then $G[\{u',u''\}\cup R]$ is $C_{n-k}$-free if and only if $n_1+n_2\leq n-k-3$.
\end{claim}

\begin{proof}
We can find that the longest cycle in $K_2\vee G[R]$ is of length $n_1+n_2+2$. Moreover, $K_2\vee (P_{n_1}\cup P_{n_2})$ contains a cycle of length $\ell$ for every $\ell\in \{3, 4,\ldots, n_1+n_2+2\}$. Therefore, $n_1+n_2+2\leq n-k-1$ if and only if $K_2\vee G[R]$ is $C_{n-k}$-free, as desired.
\end{proof}

Let $\tilde{G}$ be a planar embedding of $G[\{u',u''\}\cup R]$, and let  $u_1,u_2,\ldots,u_{|R|}$ be the vertices around $u''$ in clockwise order in $\tilde{G}$ with subscripts interpreted modulo $|R|$ (see Fig. \ref{11}).

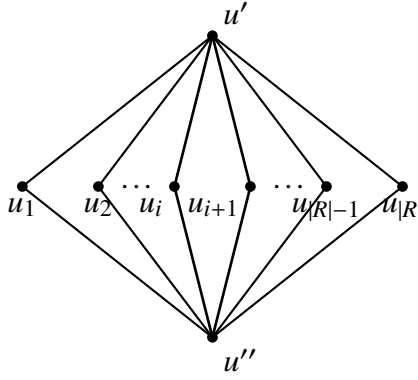
\begin{figure}[htbp!]
\centering
\begin{tikzpicture}[scale=1]
\usetikzlibrary{calc}
 \fill (0,0) circle (2pt) node[below] {$u_1$};
 \fill (1,0) circle (2pt) node[below] {$u_2$};
 \node at ($(1,0) !0.5! (2,0)$ ) {$\ldots$};

 \fill (2,0) circle (2pt) node[below left] {$u_i$};
 \fill (3,0) circle (2pt) node[below left] {$u_{i+1}$};
 \node at ($(3,0) !0.5! (4,0)$ ) {$\ldots$};
 \fill (4,0) circle (2pt) node[below] {$u_{|R|-1}$};
 \fill (5,0) circle (2pt) node[below] {$u_{|R|}$};
 \fill (2.5,2) circle (2pt) node[above right] {$u'$};
 \fill (2.5,-2) circle (2pt) node[below right] {$u''$};

 \draw[line width=0.8pt] (2.5,2) -- (0,0);
 \draw[line width=0.8pt] (2.5,2) -- (1,0);
 \draw[line width=1pt] (2.5,2) -- (2,0);
 \draw[line width=1pt] (2.5,2) -- (3,0);
 \draw[line width=0.8pt] (2.5,2) -- (4,0);
 \draw[line width=0.8pt] (2.5,2) -- (5,0);
 \draw[line width=0.8pt] (2.5,-2) -- (0,0);
 \draw[line width=0.8pt] (2.5,-2) -- (1,0);
 \draw[line width=1pt] (2.5,-2) -- (2,0);
 \draw[line width=1pt] (2.5,-2) -- (3,0);
 \draw[line width=0.8pt] (2.5,-2) -- (4,0);
 \draw[line width=0.8pt] (2.5,-2) -- (5,0);

\end{tikzpicture}
\caption{A local structure of $\tilde{G}$.}\label{11}
\end{figure}

\begin{claim}\label{claim-6}
$S$ is empty.
\end{claim}

\begin{proof}
Suppose to the contrary that $S$ is non-empty. Let $|S|=s\geq 1$. Recall that for each $u\in S$, $u$ is adjacent to at most one of $u'$ and $u''$, and is adjacent to at most 2 vertices in $R$. Since $G$ is $K_{3,3}$-minor free, we can see that $G[R]$ is $K_{1,3}$-minor free.
This indicates that $G[R]$ is either isomorphic to $C_{|R|}$, or a disjoint union of paths and isolated vertices.
Since $G[S]$ is planar,
there exists a vertex $v_1\in S$ with $d_S(v_1)\leq 5$.
Let $S_0=S$ and $S_1=S_0\setminus \{v_1\}$.
Repeat this step, we obtain a sequence of sets $S_0, S_1,\ldots,S_{s-1}$ such that $d_{S_{i-1}}(v_i)\leq 5$ and $S_{i}=S_{i-1}\setminus \{v_i\}$ for each $i\in \{1,2,\ldots,s-1\}$.
By Claims \ref{claim-3} and \ref{claim-4}, we get
\begin{align}\label{ineq-14}
\sum\limits_{\substack{w\sim v_i \\ w\in \{u',u''\}\cup R\cup S_{i-1}}}x_w
\leq 1+\sum\limits_{\substack{w\sim v_i \\ w\in R }}x_w+\sum\limits_{\substack{w\sim v_i \\ w\in S_{i-1}}}x_w
\leq \frac{121}{100}<x_{u'}+x_{u''}-\frac{7}{10}.
\end{align}

The rest of the proof will be divided into two cases according to the value of $|R|$.

\begin{case}\label{case-1}
$|R|\geq n-k-2$.
\end{case}

Since $G\in \mathcal{C}_{n,k}$ and $|R|\geq n-k-2$, it follows that $G[R]$ is a disjoint union of paths and isolated vertices. Furthermore, $G[R]$ is $P_{n-k-2}$-free.
It remains the case that $G[R]\cong \cup_{i=1}^t P_{n_i}$, where $t\geq 2$ and $n_1\geq n_2\geq \cdots \geq n_t$.
Then there exists an integer $i_0\leq |R|$ such that $u'u_{i_0}u''u_{i_0+1}u'$ is a face of $\tilde{G}$.
Let $G^*$ be the graph obtained from $\tilde{G}$ by  joining each vertex in $S$ to each vertex in $\{u',u''\}$ and making these edges cross the face $u'u_{i_0}u''u_{i_0+1}u'$.
Clearly, $G^{*}$ is planar.

Next we show that $G^{*}\in\mathcal{G}_{n,k}$.
Since $G[R]\cong \cup_{i=1}^t P_{n_i}$, we have $G^*[R\cup S]=\cup_{i=1}^t P_{n_i}\cup (|S|\cdot P_1)$. Therefore, the longest cycle in $G^{*}$ is of length $n_1+n_2+2\leq n-k-1$. By Claim \ref{claim-2.1}, we get $G^{*}$ is $C_{n-k}$-free. This indicates that
$G^*\in \mathcal{G}_{n,k}$.

One can observe that in the graph $G$ the set of edges incident to vertices in $S$ is $\cup_{i=1}^s \{wv_i|w\in N_{\{u',u''\}\cup R\cup S_{i-1}}(v_i)\}$.
Combining this with (\ref{ineq-14}), we have
\begin{align*}
\rho(G^{*})-\rho(G)&\geq\frac{\mathbf{x}^\mathrm{T}(A(G^{*})-A(G))\mathbf{x}}{\mathbf{x}^\mathrm{T}\mathbf{x}}\\
&=\frac{2}{\mathbf{x}^\mathrm{T}\mathbf{x}}\sum\limits_{i=1}^s x_{v_i}\left((x_{u'}+x_{u''})-\sum\limits_{\substack{w\sim v_i \\ w\in \{u',u''\}\cup R\cup S_{i-1}}}x_w\right)>0,
\end{align*}
contradicting that $G\in \mathcal{C}_{n,k}$.

\begin{case}\label{case-2}
$|R|\leq n-k-3$.
\end{case}
Since $G$ is planar, $G[R]$ is either isomorphic to $C_{|R|}$ or a linear forest.
Suppose first that $G[R]\cong C_{|R|}$.
Since  $G$ is planar, we have $u'u''\notin E(G)$.
Let $G^*$ be the graph obtained from $\tilde{G}$ by deleting the edges $u_1u_2, u_2u_3$, adding the edge $u'u''$, joining each vertex in $S$ to each vertex in $\{u',u''\}$ and making these edges cross the face $u'u_2u''u_3u'$.
Clearly, $G^{*}$ is planar and the longest cycle in $G^{*}$ is of length $|R|+2$.
Since $|R|\leq n-k-3$, we have $|R|+2\leq n-k-1$, which implies that $G^{*}\in \mathcal{G}_{n,k}$.
By Claim \ref{claim-4} and (\ref{ineq-14}), we get
\begin{align*}
\rho(G^{*})-\rho(G)&\geq\frac{\mathbf{x}^\mathrm{T}(A(G^{*})-A(G))\mathbf{x}}{\mathbf{x}^\mathrm{T}\mathbf{x}}\\
&\geq\frac{2}{\mathbf{x}^\mathrm{T}\mathbf{x}}\left(x_{u'}x_{u''}-x_{u_1}x_{u_2}-x_{u_2}x_{u_3}+\sum\limits_{i=1}^s x_{v_i}\left((x_{u'}+x_{u''})-\sum\limits_{\substack{w\sim v_i \\ w\in \{u',u''\}\cup R\cup S_{i-1}}}x_w\right)\right)\\
&>0,
\end{align*}
contradicting that $G\in \mathcal{C}_{n,k}$.
Thus, $G[R]$ is a linear forest. Based on this, we discuss the following in two subcases.

\begin{subcase}\label{s-case-2}
$|R|\leq n-k-4$.
\end{subcase}

If $u'u''\in E(G)$, then there exists a face $F$ in $\tilde{G}$ such that $u'u''\in E(F)$.
On the other hand, if $u'u''\notin E(G)$, then there exists an integer $i$ such that $F:=u'u_iu''u_{i+1}u'$ is a face of $\tilde{G}$. In either case, we can insert $|S|$ isolated vertices in $F$, and let $G^*$ be the graph obtained from $\tilde{G}$ by connecting each vertex in $S$ to each vertex in $\{u',u''\}$ and making these edges cross the face $F$. Clearly, $G^{*}$ is planar, and the longest cycle of $G^{*}$ is of length $|R|+3\leq n-k-1$ as $|R|\leq n-k-4$. 
Then $G^{*}\in \mathcal{G}_{n,k}$. Note that $G\subseteq P_{|R|}$ and $|R|\leq n-k-4$.
A similar discussion in Case \ref{case-1} shows that $\rho(G^{*})>\rho(G)$, a contradiction.

\begin{subcase}\label{s-case-2}
$|R|= n-k-3$.
\end{subcase}

Suppose first that $G[R]$ is a proper subgraph of $P_{n-k-3}$.
Let $G^*$ be the graph that defined as in the proof of Case 1.
Similar arguments in the proof of Case 1 show that $G^{*}\in\mathcal{G}_{n,k}$ and $\rho(G^{*})>\rho(G)$, which gives a contradiction.

It remains the case $G[R]\cong P_{n-k-3}$. 
Clearly, $s=n-2-|R|=k+1$.
We first prove that $x_{v_i}\geq \frac{1}{\rho}$ for each $v_i\in S$. Otherwise, there exists a vertex $v_{i_0}\in S$ such that $x_{v_{i_0}}< \frac{1}{\rho}$, and hence $\rho x_{v_{i_0}}=\sum\limits_{w\in N_G(v_{i_0})}x_w<1$.
Let $G^{**}$ be the graph obtained from $G$ by deleting all edges incident to $v_{i_0}$ and adding the edge $u'v_{i_0}$. Clearly, $G^{**}$ is planar and $C_{n-k}$-free, and so $G^{**}\in \mathcal{G}_{n,k}$.
However,
\begin{align*}
\rho(G^{**})-\rho(G)
&\geq\frac{\mathbf{x}^\mathrm{T}(A(G^{**})-A(G))\mathbf{x}}{\mathbf{x}^\mathrm{T}\mathbf{x}}\\
&\geq\frac{2}{\mathbf{x}^\mathrm{T}\mathbf{x}} x_{v_{i_0}}\left(x_{u'}-\sum\limits_{w\in N_G(v_{i_0})}x_w\right)>0,
\end{align*}
contradicting that $G\in \mathcal{C}_{n,k}$.
Hence, $x_{v_i}\geq \frac{1}{\rho}$ for each $v_i\in S$.

Recall that $d_R(v_i)\leq 2$ for any $v_i\in S$.
Thus, $$e(S,R)= \sum\limits_{v_i\in S}d_R(v_i)\leq 2|S|=2(k+1).$$
Let $R'$ be the set of vertices in $R$ incident to vertices in $S$.
One can observe that $|R'|\leq e(S,R)\leq 2(k+1)$,
and the subgraph $G[R\setminus R']$ contains at most $|R'|+1$ paths.
On the other hand, since $|R|=n-k-3$, we have
$$|R\setminus R'|\geq |R|-|R'|\geq (n-k-3)-2(k+1)=n-3k-5.$$
By the pigeonhole principle, we have
$$\frac{|R\setminus R'|}{|R'|+1}\geq \frac{n-3k-5}{2(k+1)}\geq 3,$$
where the last inequality holds as $n\geq 2^{k+8}+3\geq 9k+10$.
This implies that $G[R\setminus R']$ contains a path of order 3, say $P:=u_{i_0-1}u_{i_0}u_{i_0+1}$.
By the definition of $R'$, we can see that $N_S(u)=\varnothing$ for any  $u\in V(P)$.
Then,
$$\rho x_{u_i}=\sum_{u\in N_{G}(u_i)}x_u=x_{u'}+x_{u''}+\sum_{u\in N_{R}(u_i)}x_u\leq d_{G}(u_i)\leq 4$$
for each $i\in \{i_0-1,i_0,i_0+1\}$.
Consequently, $x_{u_i}\leq \frac{4}{\rho}$, and hence 
\begin{align}\label{ineq-114}
 x_{u_{i_0-1}}x_{u_{i_0}}+x_{u_{i_0}}x_{u_{i_0+1}}\leq \frac{32}{\rho^2}.
\end{align}

Let $G^{***}$ be the graph obtained from $G$ by first deleting the edges $u_{i_0-1}u_{i_0}$, $u_{i_0}u_{i_0+1}$
and all the edges incident to at least one vertex in $S$, and then adding the edges $v_iu'$ and $v_iu''$ for each $v_i\in S$ and making these edges cross the face $u'u_{i_0}u''u_{i_0+1}u'$.
Clearly, $G^{***}$ is planar and $C_{n-k}$-free, and hence $G^{***}\in \mathcal{G}_{n,k}$.
Then
\begin{align}\label{ineq-23}
&\rho(G^{***})-\rho(G)
\geq\frac{\mathbf{x}^\mathrm{T}(A(G^{***})-A(G))\mathbf{x}}{\mathbf{x}^\mathrm{T}\mathbf{x}}\nonumber\\
&\geq\frac{2}{\mathbf{x}^\mathrm{T}\mathbf{x}}
\left(\sum\limits_{i=1}^{k+1} x_{v_i}\left((x_{u'}+x_{u''})-\sum\limits_{\substack{w\sim v_i \\ w\in \{u',u''\}\cup R\cup S_{i-1}}}x_w\right)-x_{u_{i_0-1}}x_{u_{i_0}}-x_{u_{i_0}}x_{u_{i_0+1}}\right).
\end{align}
Combining this with \eqref{ineq-14}-\eqref{ineq-23}, we have
$$\rho(G^{***})-\rho(G)\geq \frac{2}{\mathbf{x}^\mathrm{T}\mathbf{x}}\left( \frac{k+1}{\rho} \frac{7}{10}-\frac{32}{\rho^2}\right)>0,$$
contradicting that $G\in \mathcal{C}_{n,k}$.

Therefore, $S$ is empty.
\end{proof}

\begin{claim}\label{claim-10}
$u'u''\in E(G)$.
\end{claim}

\begin{proof}
Suppose to the contrary that $u'u''\notin E(G)$. Note that $G\in \mathcal{C}_{n,k}$. Thus, 
$G[R]$ is $P_{n-k-2}$-free.
Then there exists some integer $i_0\in \{1,2,\dots,n-2\}$ such that $u_{i_0}u_{i_0+1}\notin E(\tilde{G}[R])$.
This implies that $u'u_{i_0}u''u_{i_0+1}u'$ is a face in $\tilde{G}$.

Let $G^{*}$ be the graph obtained from $\tilde{G}$ by adding the edge $u'u''$ and making $u'u''$ cross the face $u'u_{i_0}u''u_{i_0+1}u'$. Clearly, $G^{*}$ is a plane graph and $\rho(G^{*})>\rho(G)$. We next assert that $G^{*}\in \mathcal{G}_{n,k}$. Otherwise, $G^{*}$ contains a subgraph $H$ isomorphic to $C_{\ell}$ for every $\ell\in \{3,\dots,n-k\}$. Clearly, $u'u''\in E(H)$. Assume that $H=u'u''u_1'u_2'\ldots u_{\ell-2}'u'$.
However, an $\ell$-cycle $u'u_1'u''u_2'\ldots u_{\ell}'u'$ is already present in $G$, a contradiction. This implies that $G^{*}\in \mathcal{G}_{n,k}$. But this contradicts the maximality of $G$. Therefore, $u'u''\in E(G)$.
\end{proof}

From Claims \ref{claim-6} and \ref{claim-10},  we can see that $G$ contains a copy of $K_2 \vee (n-2)K_1$.
This completes the proof of Lemma \ref{lem-3.1}.
\end{proof}

By Lemma \ref{lem-3.1}, we find that $u'$ and $u''$ are dominating vertices of $G$, yielding $x_{u'}=x_{u''}=1$. With the above necessary tools and properties of a graph with maximum spectral radii in $\mathcal{G}_{n,k}$, we are now prepared to prove the existence of cycles of consecutive lengths from a spectral perspective.

\begin{proof}[\textbf{Proof of Theorem~\ref{thm-1}}]
Assume that $G\in \mathcal{C}_{n,k}$. By Lemma \ref{lem-3.1}, $G= K_2\vee G[R]$, where $G[R]\in \mathcal{L}_{n,a}$ for some $a\geq 0$. We first prove $a=2$. Set $G^{*}=K_2\vee (P_{n-2k-4}\cup P_{k+1}\cup P_{k+1})$.
Clearly, $P_{n-2k-4}\cup P_{k+1}\cup P_{k+1}\in \mathcal{L}_{n,2}$ and the longest cycle in $G^{*}$ is of length $n-k-1$. This indicates that $G^{*}\in \mathcal{G}_{n,k}$ and $\rho(G)\geq\rho(G^{*})$.
By Lemma \ref{lem-1}, we obtain $0\leq a\leq 2$. 
If $a\leq 1$, then $G$ contains a copy of $C_{\ell}$ for every $\ell\in\{n,n-1,\ldots,3\}$. So, $G\notin \mathcal{G}_{n,k}$, a contradiction. Hence $a=2$.

Since $G[R]\in \mathcal{L}_{n,2}$, we may assume that $G[R]\cong P_{n_1}\cup P_{n_2}\cup P_{n_3}$, where $n_1\geq n_2\geq n_3$ and $n_1+n_2+n_3=n-2$.
Since $G\in \mathcal{G}_{n,k}$, we have $n_1+n_2\leq n-k-3$, and hence $n_2\geq n_3\geq k+1$. 

Now, we prove that $n_2=k+1$.
Suppose to the contrary that $n_2\geq k+2$. Let $L'=P_{n_1'}\cup P_{n_2'}\cup P_{n_3'}$, where $n_1'=n_1+n_2-(k+1)$, $n_2'=k+1$ and $n_3'=n_3$. Clearly, $n_1'\geq n_2'\geq n_3'$ and $n_1'+n_3'=n-k-3$. By Claim \ref{claim-2.1}, $K_2\vee L'\in \mathcal{G}_{n,k}$.
However, by Lemma \ref{lem-2}, we have $\rho(K_2\vee L')>\rho(G)$,
contradicting that $G\in \mathcal{C}_{n,k}$.
Hence, $n_2=k+1$. 

Recall that $k+1\leq n_3\leq n_2=k+1$. Thus, $n_3=k+1$.
This implies that $G\cong K_2\vee (P_{n-2k-4}\cup 2P_{k+1})$, completing the proof of Theorem \ref{thm-1}.

\end{proof}


\section{Concluding remarks}\label{se-4}

The result below follows directly from Theorem \ref{thm-1} with $k=0$, and is therefore presented as a corollary without requiring a separate proof.

\begin{cor}\label{cor-2}
Let $G$ be a planar graph of order $n$ with $n\geq 1.8\times 10^{17}$. If $\rho(G)\geq \rho(K_2\vee(P_{n-4}\cup 2P_1))$. Then $G$ contains a cycle of length $\ell$ for every $\ell\in \{n, n-1, \ldots, 3\}$ unless $G\cong K_2\vee(P_{n-4}\cup 2P_1)$.
\end{cor}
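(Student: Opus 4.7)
The plan is to derive this corollary as the $k=0$ instance of Theorem \ref{thm-1}; no new argument is required, so my proposal is essentially a checklist confirming that every ingredient of the theorem specializes correctly. First I would verify the admissibility condition $k \leq \lfloor\log_2(n-3)\rfloor - 8$: with $k = 0$ and $n \geq 1.8\times 10^{17}$, we have $\lfloor\log_2(n-3)\rfloor \geq 56$, so the inequality holds with enormous slack. Next, substituting $k=0$ into the extremal graph $K_2 \vee (P_{n-2k-4}\cup 2P_{k+1})$ produces exactly $K_2 \vee (P_{n-4}\cup 2P_1)$, which is the graph whose spectral radius serves as the threshold in the corollary. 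Finally, the cycle-length range $\{n-k, n-k-1, \ldots, 3\}$ collapses to $\{n, n-1, \ldots, 3\}$, which is precisely the pancyclic conclusion sought, and the uniqueness clause $G \cong K_2 \vee (P_{n-2k-4}\cup 2P_{k+1})$ reduces word-for-word to $G \cong K_2 \vee (P_{n-4}\cup 2P_1)$.

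The main (and essentially only) thing to be careful about is whether the proof of Theorem \ref{thm-1} quietly invokes $k \geq 1$ somewhere, which would force a separate treatment of the $k=0$ case. Inspecting the building blocks, Lemma \ref{lem-1} requires only $0 \leq a_2 < a_1 \leq \frac{\sqrt{2n-4}}{4}$ and is silent on $k$; Lemma \ref{lem-2} requires $n_1 \geq n_2 \geq k+2 \geq 2$, which is compatible with $k = 0$ as long as the paths $P_{n_1}, P_{n_2}$ have at least two vertices each; and Lemma \ref{lem-3.1} is stated explicitly for every non-negative integer $k$. The one subcase in Lemma \ref{lem-3.1} that could look sensitive is $|R| = n-k-3$, where the construction attaches $s = k+1$ vertices of $S$ to the two dominating vertices; when $k = 0$ this degenerates to $s = 1$, and the inequality $\frac{k+1}{\rho}\cdot\frac{7}{10} > \frac{32}{\rho^2}$ used at the end of that subcase remains valid because $\rho \geq \sqrt{2n-4}$ is still large enough. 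Thus the entire argument goes through uniformly at $k=0$.

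Putting these observations together, the proposal is simply: apply Theorem \ref{thm-1} with $k = 0$. The hypothesis of the corollary matches the hypothesis of the theorem, the cycle-length conclusion matches the pancyclic conclusion claimed, and the unique extremal graph matches the stated exception. There is no separate proof to write; the hardest step in principle — namely checking that nothing in the ambient proof implicitly assumes $k \geq 1$ — is dispatched by the observation that all three supporting lemmas are stated and proved for arbitrary non-negative $k$ within the stipulated range.
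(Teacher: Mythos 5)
Your proposal matches the paper exactly: the authors state that the corollary ``follows directly from Theorem \ref{thm-1} with $k=0$'' and give no separate proof, which is precisely your argument of specializing the hypothesis, the extremal graph $K_2\vee(P_{n-2k-4}\cup 2P_{k+1})$, and the cycle-length range $\{n-k,\ldots,3\}$ at $k=0$. Your additional check that the supporting lemmas do not implicitly require $k\geq 1$ is sound and goes slightly beyond what the paper records, but it is the same approach.
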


Theorem \ref{thm-1} implies that for $\ell\in \left[n-\lfloor\log_2(n-3)\rfloor+8, n\right]$ and $n\geq 1.8\times 10^{17}$, we have $\mathrm{SPEX}_{\mathcal{P}}(n, C_l)=K_2\vee (P_{n-2k-4}\cup2P_{k+1})$. Nikiforov \cite{Nikiforov-1} proved that $\mathrm{SPEX}(n, C_4)=\{K_1\vee \frac{n-1}{2}K_2\}$ for $n$ is odd, while Zhai and Wang \cite{Zhai-2} proved $\mathrm{SPEX}(n, C_4)=\{K_1\vee (K_1\cup \frac{n-2}{2}K_2)\}$ for $n$ is even. Observe that $K_1\vee \frac{n-1}{2}K_2$ and $K_1\cup \frac{n-2}{2}K_2$ are both planar graphs. Thus, $\mathrm{SPEX}_{\mathcal{P}}(n, C_4)=\mathrm{SPEX}(n, C_4)$. For $\ell=3$ and $5\leq\ell\leq f(n)$, Fang, Lin and Shi \cite{Fang} characterized the spectral extremal graphs among $C_{\ell}$‐free planar graphs, where $f(n)=min\{\lfloor2(\log_2(n-3)-\log_{2}9)\rfloor+2, \lfloor \frac{8}{25}\sqrt{2(n-2)}\rfloor+2\}$. It remains mysterious to determine the spectral extremal graphs among $C_{\ell}$‐free planar graphs for some $\ell\in \left[f(n), n-\lfloor\log_2(n-3)\rfloor+8\right]$. This motivates us to propose the following problem.


\begin{prob}\label{prob-3}
For sufficiently large $n$, what are the tight spectral conditions for the existence of $C_{\ell}$ in planar graphs, where $\ell\in \left[f(n), n-\lfloor\log_2(n-3)\rfloor+8\right]$.
\end{prob}

\end{document}